\newtheorem{theorem}{Theorem}
\newtheorem{lemma}{Lemma}
\newtheorem{proposition}{Proposition}
\newtheorem{definition}{Definition}
\newtheorem{corollary}{Corollary}
\newtheorem*{notation}{Notation}
\newtheorem{example}{Example}
\begin{document}

\title[Differential Modules]{Differential Projective Modules  over Differential Rings, II}

\author{Lourdes Juan}
\address{Department of Mathematics\\
        Texas Tech University\\
        Lubbock TX\\}
\curraddr{}
\email{lourdes.juan@ttu.edu}
\thanks{}

\author{Andy Magid}
\address{Department of Mathematics\\
        University of Oklahoma\\
        Norman OK 73019\\
        }
\curraddr{}
\email{amagid@ou.edu}
\thanks{}
 
\subjclass{12H05}
\date{October 5, 2020}

\begin{abstract} 
Differential modules over a commutative differential ring $R$ which are finitely generated projective as ring modules, with differential homomorphisms, form an additive category, so their isomorphism classes form a monoid. We study the quotient monoid   by the submonoid of isomorphism classes of free modules with component wise derivation. This quotient monoid has the reduced $K_0$ of $R$ (ignoring the derivation) as an image and contains the reduced $K_0$ of the constants of $R$ as its subgroup of units. This monoid provides a description of the isomorphism classes of differential projective $R$ modules up to an equivalence.
\end{abstract}

\maketitle

\section{Introduction} 

Let $R$ be a commutative differential ring with derivation $D_R$, or just $D$ if the context is clear. A \emph{differential projective module} over $R$ is a finitely generated projective $R$ module $M$ and an additive endomorphism $D_M$, or just $D$ if the context is clear, such that $D_M(rm)=D_R(r)m+rD_M(m)$ for all $r \in R$ and $m \in M$. For example, if $A \in M_n(R)$ defining $D$ on the free $R$ module $R^n$ of column $n$-tuples of elements of $R$ by $D((x_1, \dots, x_n)^T)=(x_1^\prime, \dots, x_n^\prime)^T + A(x_1, \dots, x_n)^T$ makes the free module into a differential projective module denoted  $(R^n, A)$ whose differential structure is denoted $D_A$. (Note: in \cite{jm2} we used a similar construction for differential structures on free modules $R^{(n)}$ of row $n$-tuples.) When $A$ is a zero matrix, denoted $0$ regardless of size, we call the differential module, and those differentially isomorphic to it, \emph{differentially trivial}.  Thus $(R,0)^{(n)}$, which is differentially isomorphic to $(R^n,0)$ is differentially trivial.

In \cite{jm2} we investigated differential projective modules and looked at their classification. It is our goal in this work to advance this classification. In \cite{jm2}, we took a $K$ theoretic approach. We review briefly that approach and its difficulties.

The isomorphism classes of differential projective modules, with an addition operation induced from direct sum, form a commutative monoid with identity the class of the zero module; the most general group to which this monoid maps is denoted $K_0^\text{diff}(R)$ \cite[\S 4]{jm2}. We use brackets to denote the image of a differential projective module in $K_0^\text{diff}(R)$. For differential projective modules $M$ and $N$, $[M]=[N]$ provided there is a differential projective module $P$ such that $M \oplus P$ is differentially isomorphic to $N \oplus P$. There is a differential projective module $Q$ such that $P \oplus Q$ is differentially isomorphic to $(R^n,C)$ for suitable $n$ and $C$  \cite[Corollary 3, p. 4342]{jm2} so we conclude that $[M]=[N]$ provided there is a $C$ with $M \oplus  (R^n,C)$ differentially isomorphic to $N \oplus (R^n,C)$.

Forgetting the differential structure defines a homomorphism from this differential $K$ group to the usual $K$ group, $K_0^\text{diff}(R) \to K_0(R)$. This is an epimorphism by  \cite[Theorem 2, p. 4341]{jm2}, and every element of the kernel has the form $[(R^n,C)]-[(R^n,D)]$ by \cite[\S 4]{jm2}. Informally, these results say that, setting aside the non--differential $K$-theory of $R$, the differential $K$ theory of $R$ is controlled by the classes of differential projective modules whose underlying $R$ module is free. Thus the classification problem, using $K$ theory, comes down to the relation of declaring $(R^n,A)$ and $(R^n,B)$ equivalent provided there is a $C$ with $(R^n,A) \oplus (R^m,C)$ differentially isomorphic to
$(R^n,B) \oplus (R^m,C)$.

Differential projective modules $(R^n,A)$ and $(R^n,B)$ are differentially isomorphic if there is an invertible matrix $T$ with $T^\prime = AT-TB$.  Thus $(R^n,A) \oplus (R^m,C)$ is differentially isomorphic to $(R^n,B) \oplus (R^m,C)$ provided there is an invertible matrix $S$ with $S^\prime=(A \oplus C)S -S(B \oplus C)$. Deciding whether  $(R^n,A)$ and $(R^n,B)$ are equivalent thus requires considering all possible $C$'s and $S$'s for which this equation holds.

To simplify this differential matrix algebra problem, we propose a finer classification, where we restrict the $C$'s to be zero matrices. (Example \ref{E:nocancellation} below shows that this classification can actually be finer.) This yields  a quotient of the monoid of isomorphism classes which is a monoid, but need not be a group.

Starting with the monoid of isomorphism classes of differential projective modules we look at the quotient by the submonoid of isomorphism classes of modules of the form $(R^n,0)$. While this quotient differential monoid is not always a group, it maps surjectively to the similar object defined for $R$ ignoring the derivation, which is a group, and which we show is isomorphic to $K_0(R)/<[R]>$. When $R$ is connected, this quotient is the kernel of the rank function. Moreover, we show that the group of units of the quotient monoid is isomorphic to  $K_0(R^D)/<[R^D]>$. Also, we will see that the part of the quotient monoid not described by the $K$ theory of $R$ and $R^D$ is captured by the quotient of the submonoid  of differential isomophism classes of the modules of the form $(R^n,A)$. (This approach is similar to the one adopted for differential Azumaya algebras in \cite{m}, an approach taken because of the difficulties similar to those noted above that appear in \cite{jm}).

\section{The Monoids}

A monoid is a set endowed with an associative binary operation and an identity element. Every monoid considered here will commutative.  We use addition for the operation, and $0$ for the identity. Except for the notation for the operation, the following observations are identical with \cite{m}: 
Throughout this section we will be concerned with monoids of isomorphism classes of differential modules and various quotients thereof.  If $N \subseteq M$ is a submonoid, by $M/N$ we mean the set of equivalence classes on $M$ under the equivalence relation $m_1 \sim m_2$ if there are $n_1, n_2 \in N$ such that $m_1+n_1=m_2+n_2$. Sums of equivalent elements are equivalent, which defines a commutative operation on $M/N$ by adding representatives of equivalence classes, and the equivalence class of the identity is an identity. Thus $M/N$ is a monoid.  The projection $p: M \to M/N$ ($p(m)$ is the class of $m$)  is an epimorphism, $p(N)=0$, and any monoid homomorphism $q: M \to Q$ with $q(N)=0$ factors uniquely through $p$.
Note, however,  that we can have $M/N=0$ but $N \neq M$, for example $\mathbb Q^*/\mathbb Z^\times$, the multiplicative group of non-zero rationals modulo the multiplicative monoid of non-zero integers.. Thus $p^{-1}(0)$ can be strictly larger than $N$.

If $N$ is a subgroup of the monoid $M$, $M/N$ is the set of $N$ orbits in $M$ where $N$ acts by translation.

The set of invertible elements $U(M)$ of $M$ is its maximal subgroup. The set of elements $a \in M$ whose equivalence classes in $M/N$ are invertible is 
\[
\{ a \in M \vert \exists b \in M, n \in N \text{ such that } a+b+n \in N\}.
\]
The monoid $M/U(M)$ has no invertible elements except the identity.

We will be considering the following monoids. In the definitions we sometimes consider $R$ as a commutative ring and sometimes as a differential ring, and similarly for differential projective $R$ modules. 

\begin{definition} \label{themonoids} 
$P(R)$ denotes the monoid of isomorphism classes of finitely generated projective $R$ modules with the operation induced by direct sum and the identity the isomorphism class of $0$.

$P^\text{diff}(R)$ denotes denotes the monoid of isomorphism classes of differential finitely generated projective $R$ modules with the operation induced by direct sum and the identity the isomorphism class of $0$.

$F(R)$ denotes the submonoid of $P(R)$ whose elements are isomorphism classes of free $R$ modules.

$F^\text{diff}(R)$ denotes the submonoid of $P^\text{diff}(R)$ whose elements are isomorphism classes of differential projective modules of the form $(R^n, Z)$.

$F_0^\text{diff}(R)$ denotes the submonoid of $P^\text{diff}(R)$ whose elements are isomorphism classes of differential projective modules  of the form $(R^n, 0)$.

$PC(R)$  denotes $P(R)/F(R)$  

$PC^\text{diff}(R)$ denotes $P^\text{diff}(R)/F_0^\text{diff}(R)$.

$FC^\text{diff}(R)$ denotes $F^\text{diff}(R)/F_0^\text{diff}(R)$.

$PC(R)$ is called the \emph{projective class monoid} of $R$.

$PC^\text{diff}(R)$ is called the \emph{differential projective class monoid} of $R$.

$FC^\text{diff}(R)$ is called the \emph{ diffferential  free module class monoid} of $R$.
\end{definition}

All of the monoids in Definition \ref{themonoids} are functors.

Using Definiton \ref{themonoids} and the definition of quotient monoids we see that if $P$ and $Q$ are finitely generated projective $R$ modules (respectively differential finitely generated projective $R$ modules) then their classes in $PC(R)$ (respectively $PC^\text{diff}(R)$) will be equal provided for some $m$ and $n$  $P \oplus R^n$ is isomorphic to $Q \oplus R^m$ (respectively $P \oplus (R^n, 0)$ is differentially isomorphic to $Q \oplus (R^m, 0)$). In particular, $P$ is zero in $PC(R)$ (respectively $PC^\text{diff}(R)$) provided for some $m$ and $n$  $P \oplus R^n$ is isomorphic to $ R^m$ (respectively $P \oplus (R^n, 0)$ is differentially isomorphic to $ (R^m, 0)$). In this case $P$ is called stably free (respectively \emph{stably differentially trivial}).

\begin{proposition} \label{reducedK} $PC(R)$ is a group isomorphic to $K_0(R)/<[R]>$. 
\end{proposition}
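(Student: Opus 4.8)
The plan is to produce an explicit isomorphism $PC(R) \to K_0(R)/\langle [R]\rangle$ and to read off the group structure along the way. First I would observe directly that $PC(R)$ is a group. Every finitely generated projective $R$ module $P$ admits a complement, i.e. there is $P'$ with $P \oplus P' \cong R^n$ for some $n$; since $R^n \in F(R)$, its class is $0$ in $PC(R)$, so $[P] + [P'] = 0$ there and $[P']$ inverts $[P]$. Hence $PC(R)$ is a group, and every class is represented by a single projective module.

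Next I would build the comparison map. The canonical map $P(R) \to K_0(R)$ into the Grothendieck group (group completion) of $P(R)$, followed by the quotient $K_0(R) \to K_0(R)/\langle [R]\rangle$, is a monoid homomorphism sending each free class $[R^n] \mapsto n[R] \mapsto 0$. Thus it kills $F(R)$, and by the universal property of the quotient monoid recorded above it factors uniquely through $p \colon P(R) \to PC(R)$, giving $\phi \colon PC(R) \to K_0(R)/\langle [R]\rangle$ with $\phi([P]) = [P] + \langle [R]\rangle$. Surjectivity is then immediate: an arbitrary element of $K_0(R)$ has the form $[P]-[Q]$, and choosing $Q'$ with $Q \oplus Q' \cong R^k$ gives $-[Q] \equiv [Q'] \pmod{\langle[R]\rangle}$, whence $[P]-[Q] \equiv [P\oplus Q'] = \phi([P \oplus Q'])$ in the quotient.

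The crux is injectivity, where I must reconcile the relation defining $PC(R)$ (namely $P \oplus R^n \cong Q \oplus R^m$, with possibly $m \neq n$) against equality modulo $\langle[R]\rangle$ in $K_0(R)$. Suppose $\phi([P]) = \phi([Q])$, i.e. $[P]-[Q] = k[R]$ in $K_0(R)$ for some $k \in \mathbb{Z}$; after interchanging $P$ and $Q$ I may assume $k \geq 0$, so $[P] = [Q \oplus R^k]$ in $K_0(R)$. Here I would invoke the standard characterization of the group completion: equality of classes in $K_0(R)$ means there is a projective $T$ with $P \oplus T \cong Q \oplus R^k \oplus T$; adding a complement $T'$ of $T$ (so $T \oplus T' \cong R^n$) converts this into $P \oplus R^n \cong Q \oplus R^{k+n}$. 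This is exactly the relation witnessing $[P] = [Q]$ in $PC(R)$, so $\phi$ is injective. Together with surjectivity this yields the isomorphism, and since the target is a group it confirms that $PC(R)$ is a group. I expect the only genuine care to be needed in this last step, in passing between an arbitrary common summand and a free one so as to match the two forms of stable isomorphism.
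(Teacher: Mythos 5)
Your proof is correct, and it diverges from the paper's exactly at the point you flagged as the crux. Both arguments begin identically: $PC(R)$ is a group because a complement $P'$ of $P$ gives an inverse class, and the map $\phi\colon PC(R) \to K_0(R)/\langle[R]\rangle$ is obtained by factoring $P(R) \to K_0(R) \to K_0(R)/\langle[R]\rangle$ through the quotient by $F(R)$. Where you then verify surjectivity and injectivity of $\phi$ by hand --- using the two standard structural facts about group completions, namely that every element of $K_0(R)$ is a difference $[P]-[Q]$ and that $[M]=[N]$ in $K_0(R)$ if and only if $M \oplus T \cong N \oplus T$ for some finitely generated projective $T$ --- the paper instead runs a universal property in the opposite direction: since $PC(R)$ is a group, the monoid map $P(R) \to PC(R)$ factors through the group completion $K_0(R)$, and then through $K_0(R)/\langle[R]\rangle$ because $[R] \mapsto 0$; this produces an explicit map $\psi$ in the reverse direction, and the two composites are the identity because they are so on the images of $P(R)$, which determine everything by uniqueness of factorizations. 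The paper's argument is thus purely formal: it never needs to know what elements of $K_0(R)$ look like or when two classes coincide, only the universal properties of group completion and of quotient monoids. Your argument buys concreteness: the injectivity step exhibits precisely how equality modulo $\langle[R]\rangle$ translates into the relation $P \oplus R^n \cong Q \oplus R^{k+n}$ defining $PC(R)$, including the necessary replacement of an arbitrary common summand $T$ by a free one --- a point the formal proof silently bypasses. Both are complete; yours requires the standard (but nontrivial) explicit description of the group completion, while the paper's requires only its universal property.
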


\begin{proof} Let $P$ be a finitely generated projective $R$ module. The existence of a $Q$ such that $P \oplus Q$ is a free module shows that the class of $Q$ is an inverse to the class of  $P$ in $P(R)/F(R)$. Thus $PC(R)$ is a group. Hence the map $P(R) \to PC(R)$ factors through $K_0(R)$, and since $[R] \mapsto 0$ it factors through $K_0(R)/<[R]>$.  On the other hand, the monoid map $P(R) \to K_0(R)$ maps $F(R)$ into $<[R]>$ so that $P(R) \to K_0(R)/<[R]>$ factors through $P(R)/F(R)$. The composites in either order are the identity, so $PC(R)$ is isomorphic to $K_0(R)/<[R]>$. \end{proof}

Suppose $R$ is connected, so that the rank homomorphism $rk: K_0(R) \to \mathbb Z$ is defined.
By \cite[Proposition 3.2 p. 460]{b2} $K_0(R)/<[R[>$ is isomorphic to $\text{Ker}(rk)$.

The kernel of $rk$ is denoted $Rk_0(R)$ in \cite[p. 459]{b2}. Sometimes the kernel of $rk$ is called the \emph{reduced} $K$ group and denoted $\tilde{K}_0(R)$.

The homomorphism $P^\text{diff}(R) \to P(R)$ obtained by forgetting the derivation is surjective \cite[Theorem 2, p. 4341]{jm2}, so  the induced map $PC^\text{diff}(R) \to PC(R)$ is an epimorphism. This map carries $FC^\text{diff}(R)$ to $0$. In fact we have the following:

\begin{proposition} \label{kerneltoK}  The map $PC^\text{diff}(R) \to PC(R)$ is an epimorphism.  The map $p: PC^\text{diff}(R)/FC^\text{diff}(R) \to PC(R)$ is an isomorphism
\end{proposition}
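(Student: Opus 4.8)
The plan is to pass from the stated quotients to the ``larger'' quotients by the full differential free submonoid, and then to prove that the forgetful map induces an isomorphism of groups. First I would record the elementary fact that for nested submonoids $A \subseteq B \subseteq M$ one has $(M/A)/(B/A) \cong M/B$; this is a direct chase of the defining equivalence relation, using $A \subseteq B$ to absorb the $A$-terms into $B$-terms. Applying it with $A = F_0^\text{diff}(R)$, $B = F^\text{diff}(R)$, and $M = P^\text{diff}(R)$ identifies $PC^\text{diff}(R)/FC^\text{diff}(R)$ with $P^\text{diff}(R)/F^\text{diff}(R)$, while $PC(R) = P(R)/F(R)$ by definition. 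Under these identifications $p$ becomes the map $\bar f$ induced by the forgetful functor $P^\text{diff}(R) \to P(R)$, which carries $F^\text{diff}(R)$ into $F(R)$. Since the forgetful map is surjective by \cite[Theorem 2, p. 4341]{jm2}, $\bar f$ is surjective, which also yields the first assertion of the proposition; and by Proposition \ref{reducedK} the target $PC(R)$ is a group.

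Next I would show the source is a group, so that $\bar f$ is a surjective homomorphism of groups. Given a class represented by a differential projective module $M$, \cite[Corollary 3, p. 4342]{jm2} furnishes a differential projective $Q$ with $M \oplus Q$ differentially isomorphic to some $(R^n, C) \in F^\text{diff}(R)$. Hence $[M] + [Q] = 0$ in $P^\text{diff}(R)/F^\text{diff}(R)$, so every element is invertible and the source is a group.

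The heart of the argument is injectivity, which for a homomorphism of groups reduces to showing $\ker \bar f = 0$. The key lemma is a transport-of-structure observation: if $(P, D)$ is a differential projective module whose underlying module $P$ is $R$-isomorphic, via some $\theta$, to a free module $R^n$, then $\theta D \theta^{-1}$ is again a differential structure, and every differential structure on the free module $R^n$ is $D_C$ for the matrix $C$ whose columns are the images of the standard basis vectors; thus $(P, D)$ is differentially isomorphic to $(R^n, C)$ and lies in $F^\text{diff}(R)$. Now suppose $[M] \in \ker \bar f$, that is, the underlying module $|M|$ is stably free, say $|M| \oplus R^a \cong R^b$. Endowing $R^a$ with the trivial structure, $M \oplus (R^a, 0)$ is a differential structure on a module isomorphic to $R^b$, hence by the lemma is differentially isomorphic to some $(R^b, C) \in F^\text{diff}(R)$. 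Since $(R^a,0) \in F_0^\text{diff}(R) \subseteq F^\text{diff}(R)$ as well, this shows $[M] = 0$ in $P^\text{diff}(R)/F^\text{diff}(R)$.

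Therefore $\bar f$ is an injective and surjective homomorphism of groups, hence an isomorphism, and so is $p$. I expect the main obstacle to be organizational rather than conceptual: carefully verifying that the two nested-quotient identifications carry $p$ to $\bar f$, and checking the transport-of-structure lemma (that conjugation yields a legitimate derivation and that every derivation on a free module has matrix form). Once the source is known to be a group, the substantive content is just the short stably-free computation, so I would present the group structure of $P^\text{diff}(R)/F^\text{diff}(R)$ early and lean on it.
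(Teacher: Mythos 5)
Your proposal is correct and takes essentially the same route as the paper: surjectivity of the forgetful map via \cite[Theorem 2, p.~4341]{jm2}, the group structure of the source via \cite[Corollary 3, p.~4342]{jm2}, and triviality of the kernel by noting that a differential module whose underlying module is stably free becomes, after adding $(R^a,0)$, a differential structure on a free module and hence lies in $F^\text{diff}(R)$. The only differences are expository: you make explicit the nested-quotient identification $PC^\text{diff}(R)/FC^\text{diff}(R) \cong P^\text{diff}(R)/F^\text{diff}(R)$ (the paper only uses that the former is an epimorphic image of the latter) and the transport-of-structure lemma showing every differential structure on a free module is some $(R^n,C)$, which the paper asserts without proof.
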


\begin{proof} As we have remarked, \cite[Theorem 2, p. 4341]{jm2} implies the first assertion and so $p$ is an epimorphism. To see that it is an isomorphism, we first show that it is a  homomorphism of groups and then show it has trivial kernel. To see that $PC^\text{diff}(R)/FC^\text{diff}(R)$ is a group, let $P$ be a differential finitely generated projective $R$ module. By \cite[Corollary 3 p. 4342]{jm2} there is a differential projective module $Q$ such that $P \oplus Q$ is of the form $(R^n,A)$ for some $n$ and some $A$. That is, the class of $P$ plus the class of $Q$ in $P^\text{diff}(R)$ lies in $F^\text{diff}(R)$. Thus $P^\text{diff}(R)/F^\text{diff}(R)$ is a group, hence so is its image $PC^\text{diff}(R)/FC^\text{diff}(R)$. Now suppose again that $P$ is a differential finitely generated projective module such that its class in $PC^\text{diff}(R)/FC^\text{diff}(R)$ is sent to $0$ by $p$. Thus as projective $R$ modules $P$ and $0$ have the same class in $PC(R)$, so there are $m$ and $n$ such that $P \oplus R^n$ is isomorphic to $R^m$. Thus, using the given differential structure on $P$, $P \oplus (R^n, 0)$ is a differential module, free of rank $m$ as an $R$ module, and hence of the form $(R^m, Z)$. It follows that the class of $P$ in $PC^\text{diff}(R)/FC^\text{diff}(R)$ is trivial. Thus $p$ has trivial kernel, so is injective, and hence an isomorphism.
\end{proof}

Informally, Proposition \ref{kerneltoK} can be understood as asserting that the classification of differential projective $R$ modules comes down to the classification of those which are free as $R$ modules, since when the latter are nullified the differential structures on projective modules depend only on the underlying $R$ module. More formally:

\begin{corollary} \label{trivialFCM} If $FC^\text{diff}(R)=0$ then $PC^\text{diff}(R)=PC(R)$.
\end{corollary}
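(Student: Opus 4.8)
The plan is to deduce this immediately from Proposition \ref{kerneltoK}, whose content is that the map $p \colon PC^\text{diff}(R)/FC^\text{diff}(R) \to PC(R)$ is an isomorphism. The only point requiring attention is the identification of the left-hand side under the hypothesis $FC^\text{diff}(R) = 0$.

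First I would record the general observation that quotienting a monoid by its trivial submonoid changes nothing. Taking $N = \{0\}$ in the definition of $M/N$ given in Section 2, the relation $m_1 \sim m_2$ asserts the existence of $n_1, n_2 \in N$ with $m_1 + n_1 = m_2 + n_2$; since $N = \{0\}$ forces $n_1 = n_2 = 0$, this says exactly $m_1 = m_2$. Hence $M/\{0\} = M$ as monoids, and the projection $M \to M/\{0\}$ is the identity.

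Applying this with $M = PC^\text{diff}(R)$, the hypothesis $FC^\text{diff}(R) = 0$ gives that $PC^\text{diff}(R)/FC^\text{diff}(R)$ is canonically $PC^\text{diff}(R)$ itself. Composing this identification with the isomorphism $p$ of Proposition \ref{kerneltoK} yields $PC^\text{diff}(R) \cong PC(R)$, which is the assertion.

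I do not anticipate a genuine obstacle; the result is a formal consequence of the proposition. The one thing to keep in mind is the caution raised in Section 2 that $M/N = 0$ need not force $N = M$: that is the converse implication and is irrelevant here, since the argument uses only the straightforward direction, namely that $N = 0$ implies $M/N = M$.
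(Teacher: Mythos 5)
Your proof is correct and matches the paper's approach: the paper states Corollary \ref{trivialFCM} as an immediate consequence of Proposition \ref{kerneltoK} (offering no separate proof), and your argument simply makes explicit the routine identification $PC^\text{diff}(R)/\{0\} = PC^\text{diff}(R)$ that the authors left implicit. Your added caution about the one-way nature of the implication ($N=0$ gives $M/N=M$, while $M/N=0$ does not force $N=M$) is exactly the right point to flag, and it is consistent with the paper's own remark in Section 2.
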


\begin{corollary} \label{trivialPCMR} If $PC(R)=0$ then $FC^\text{diff}(R)=PC^\text{diff}(R)$.
\end{corollary}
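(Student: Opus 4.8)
The plan is to deduce Corollary \ref{trivialPCMR} directly from Proposition \ref{kerneltoK}, which established that the forgetful map $p \colon PC^\text{diff}(R)/FC^\text{diff}(R) \to PC(R)$ is an isomorphism. Since we are assuming $PC(R)=0$, the target of this isomorphism is the trivial monoid, and hence $PC^\text{diff}(R)/FC^\text{diff}(R)=0$ as well. So the entire content of the corollary is to translate the statement ``the quotient $PC^\text{diff}(R)/FC^\text{diff}(R)$ is trivial'' back into a statement comparing $FC^\text{diff}(R)$ with $PC^\text{diff}(R)$.

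The first step is to recall the definition of the quotient monoid. By the conventions set up in Section 2, $PC^\text{diff}(R)/FC^\text{diff}(R)=0$ means that every element of $PC^\text{diff}(R)$ is equivalent to the identity under the relation defined by the submonoid $FC^\text{diff}(R)$. Explicitly, given the class $[P]$ of a differential projective module $P$ in $PC^\text{diff}(R)$, there exist elements $n_1, n_2 \in FC^\text{diff}(R)$ with $[P]+n_1 = n_2$, that is, $[P]+n_1=n_2$ inside $PC^\text{diff}(R)$. I would then observe that this says precisely that the class of every $P$ already lies in the image of $FC^\text{diff}(R)$ up to the equivalence, so unwinding the quotient the two submonoids coincide as sets of classes.

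The cleaner way to organize this, which I would prefer, is to use the group structure already proved in Proposition \ref{kerneltoK}. There it is shown that $PC^\text{diff}(R)/FC^\text{diff}(R)$ is a group; combined with the hypothesis $PC(R)=0$ and the isomorphism $p$, we get that this group is the trivial group. The surjection $PC^\text{diff}(R) \to PC^\text{diff}(R)/FC^\text{diff}(R)$ then has the property that its target is $0$, which by the universal description of the quotient in Section 2 means that every class of $PC^\text{diff}(R)$ becomes invertible modulo $FC^\text{diff}(R)$; since the quotient is trivial, every element of $PC^\text{diff}(R)$ is in the preimage $p^{-1}(0)$. The remaining task is simply to identify this with the assertion $FC^\text{diff}(R)=PC^\text{diff}(R)$, noting that the inclusion $FC^\text{diff}(R) \subseteq PC^\text{diff}(R)$ is automatic and that the triviality of the quotient forces the reverse containment at the level of classes.

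The main obstacle, and the only genuinely subtle point, is the warning issued in Section 2 that one can have $M/N=0$ while $N \neq M$ (the example $\mathbb{Q}^*/\mathbb{Z}^\times$). So the triviality of $PC^\text{diff}(R)/FC^\text{diff}(R)$ does \emph{not} formally guarantee $FC^\text{diff}(R)=PC^\text{diff}(R)$ from the abstract monoid-quotient formalism alone. I would therefore not rely on the abstract implication but instead argue concretely: given $P$, the relation $[P]+n_1=n_2$ with $n_i$ classes of modules of the form $(R^{k_i},A_i)$ must be upgraded to an actual equality of classes in $PC^\text{diff}(R)$ using the cancellation afforded by the group structure on $PC^\text{diff}(R)/FC^\text{diff}(R)$ together with the stably-free-type argument from the proof of Proposition \ref{kerneltoK}. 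Concretely, I would show that $P$ itself, after adding a suitable $(R^n,0)$, becomes differentially isomorphic to some $(R^m,A)$, so that its class already lies in $FC^\text{diff}(R)$; this is exactly the mechanism used in the kernel computation of Proposition \ref{kerneltoK}, so the corollary follows once that identification is made explicit.
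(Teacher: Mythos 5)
Your proposal is correct and, after rightly recognizing that the abstract quotient formalism cannot give the conclusion (the $\mathbb{Q}^*/\mathbb{Z}^\times$ warning: $M/N=0$ does not force $N=M$), it lands on exactly the paper's own argument: use $PC(R)=0$ to get $P\oplus R^n\cong R^m$ as $R$ modules, so that $P\oplus(R^n,0)$ with its differential structure is differentially isomorphic to some $(R^m,A)$, placing the class of $P$ in $FC^\text{diff}(R)$. This is essentially the same proof as the paper's, which likewise argues "as in the proof of Proposition \ref{kerneltoK}" rather than invoking that proposition formally.
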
 

\begin{proof}  (Of Corollary \ref{trivialPCMR}.) As remarked above, it is possible for the quotient of a monoid by a proper submonoid to be trivial. We argue as in the proof of Proposition \ref{kerneltoK}. Since $PC(R)=0$, for any projective $R$ module $P$ there are $m$,$n$ such that $P \oplus R^n$ is isomorphic to $R^m$. Now suppose $P$ has a differential structure. Then so does $P \oplus (R^n, 0)$, which is then isomorphic to $(R^m, A)$ for some $A$. Since the class of $(R^m,A)$ is in $FC^\text{diff}(R)$, so is the class of $P \oplus (R^n,0)$, and this is the same as the class of $P$.
\end{proof}

Proposition \ref{kerneltoK} is also, in a way, a statement about the functor from differential modules to modules. A counterpart to that is the functor from modules over the constants to differential modules, which results in the following:

\begin{proposition} \label{units} The functor $P_0 \mapsto R \otimes_{R^D} P_0$ induces a group isomorphism $q: PC(R^D) \to U(PC^\text{diff}(R))$.
\end{proposition}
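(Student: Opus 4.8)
The plan is to construct an explicit inverse to $q$ using the \emph{constants} functor $M \mapsto M^D := \ker(D_M)$, which assigns to each differential $R$ module an $R^D$ module, and to verify bijectivity by passing back and forth between a differential module and its module of constants. Throughout I would use that $PC(R^D)$ is a group, by Proposition \ref{reducedK} applied to the constant ring $R^D$.

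First I would record that $q$ is a well-defined group homomorphism. The assignment $P_0 \mapsto R \otimes_{R^D} P_0$, with derivation $D(r \otimes p) = D_R(r) \otimes p$, sends a finitely generated projective $R^D$ module to a differential projective $R$ module and is additive on direct sums, so it induces a monoid homomorphism $P(R^D) \to P^\text{diff}(R)$ and hence $P(R^D) \to PC^\text{diff}(R)$. A free module $(R^D)^k$ maps to $(R^k,0) \in F_0^\text{diff}(R)$, whose class in $PC^\text{diff}(R)$ is $0$; therefore the map descends to $q \colon PC(R^D) \to PC^\text{diff}(R)$. Since $PC(R^D)$ is a group, every $q([P_0])$ is invertible, so $q$ lands in $U(PC^\text{diff}(R))$ and is a homomorphism of groups.

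The key step — and the one I expect to be the main obstacle — is a structural lemma: \emph{every differential direct summand of a trivial module $(R^m,0)$ has the form $R \otimes_{R^D} P_0$ for some finitely generated projective $R^D$ module $P_0$, and $(R \otimes_{R^D} P_0)^D = P_0$}. To prove it I would first observe that a differential endomorphism $T$ of $(R^m,0)$ satisfies $T' = 0 \cdot T - T \cdot 0 = 0$, so $T \in M_m(R^D)$; consequently a differential summand of $(R^m,0)$ is the image $e R^m$ of a constant idempotent $e \in M_m(R^D)$. Because $e$ has constant entries it commutes with $D$, so $eR^m$ is differentially isomorphic to $R \otimes_{R^D} P_0$ with $P_0 = e(R^D)^m$ projective over $R^D$, and a direct check gives $(eR^m)^D = e(R^D)^m = P_0$. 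Functoriality of $(-)^D$ then shows a differential isomorphism $R \otimes_{R^D} P_0 \cong R \otimes_{R^D} P_0'$ restricts to an $R^D$ isomorphism $P_0 \cong P_0'$. The delicate points are verifying that the constants functor exactly recovers $P_0$ and, more conceptually, recognizing that an abstract monoid unit must be realized concretely as a differential summand of some $(R^m,0)$.

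Granting the lemma, surjectivity and injectivity follow quickly. For surjectivity, a class $[M] \in U(PC^\text{diff}(R))$ admits $N$ with $[M]+[N]=0$, which unwinds, via the definition of the quotient monoid, to a differential isomorphism $M \oplus N \oplus (R^n,0) \cong (R^m,0)$ for suitable $n,m$; thus $M$ is a differential summand of $(R^m,0)$, so $M \cong R \otimes_{R^D} P_0$ and $[M] = q([P_0])$. For injectivity, $q([P_0]) = q([P_0'])$ means $(R \otimes_{R^D} P_0) \oplus (R^n,0) \cong (R \otimes_{R^D} P_0') \oplus (R^m,0)$ differentially; rewriting each summand $(R^n,0) = R \otimes_{R^D}(R^D)^n$ and applying $(-)^D$ yields $P_0 \oplus (R^D)^n \cong P_0' \oplus (R^D)^m$ over $R^D$, that is $[P_0]=[P_0']$ in $PC(R^D)$. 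Hence $q$ is an isomorphism of groups.
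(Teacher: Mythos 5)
Your proposal is correct and follows essentially the same route as the paper: show $q$ is a well-defined homomorphism landing in the units, get surjectivity from the fact that a unit class makes $M$ a differential direct summand of some $(R^m,0)$ and hence of the form $R \otimes_{R^D} P_0$, and get injectivity by applying the constants functor. The only difference is that where the paper invokes \cite{jm2} (Theorem 1, p.~4340, for summands of trivial modules being induced from $R^D$, and Lemma 1, p.~4340, for $(R \otimes_{R^D} P_0)^D = P_0$), you prove these facts inline via the observation that a differential idempotent endomorphism of $(R^m,0)$ is a constant idempotent matrix; this argument is valid and makes your write-up self-contained, but it is the same proof with the cited black boxes opened rather than a genuinely different strategy.
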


\begin{proof} The functor induces a map $P(R^D) \to P^\text{diff}(R)$. If $P_0$ is a free $R^D$ module of finite rank, then $R \otimes_{R^D} P_0$ belongs to $F_0^\text{diff}(R)$, so the map passes to a homomorphism $PC(R^D) \to PC^\text{diff}(R)$. If $P_0$ is a finitely generated projective $R^D$ module, and $Q_0$ is a finitely generated projective $R^D$ module such that $P_0 \oplus Q_0$ is free, then in $PC^\text{diff}(R)$ the classes $R \otimes_{R^D} P_0$ and $R \otimes_{R^D} Q_0$ add up to the class of $0$, and hence are inverses of each other. Thus the homomorphism $PC(R^D) \to PC^\text{diff}(R)$  has image in the units of the range.  Suppose $P$ is a differential finitely generated projective module whose class in $PC^\text{diff}(R)$ is invertible. Then there exist $m$, $n$, and a differential finitely generated projective module $Q$ such that $P \oplus Q \oplus (R^n,0)$ is isomorphic to $(R^m,0)$. In particular, $P$ is a differential direct summand of $(R^m,0)$. By \cite[Theorem 1, p. 4340]{jm2}, this means that $P$ is of the form $R \otimes_{R^D} P_0$ where $P_0$ is a finitely generated projective $R^D$ module. This shows that $q$ is group epimorphism. Suppose that the class of $P_0$ is in the kernel of $q$. Then the classes of $R \otimes_{R^D} P_0$ and $0$ are the same in $PC^\text{diff}(R)$ so there are $m$, $n$ such that $(R \otimes_{R^D} P_0) \oplus (R^n, 0)$ is differentially isomorphic to $(R^m,0)$. Now $(R \otimes_{R^D} P_0) \oplus (R^n, 0)$ is differentially isomorphic to $R \otimes_{R^D}( P_0 \oplus (R^D)^n)$ while $(R^m, 0)$ is differentially isomorphic to $R \otimes_{R^D} (R^D)^m$. Taking constants and applying \cite[Lemma 1 p. 4340]{jm2} we see that $P_0 \oplus (R^D)^n$ is $R^D$ isomorphic to  $(R^D)^m$. That is, $P_0$ and $0$ represent the same class in $PC(R^D)$. Thus $q$ has trivial kernel and hence is a group isomorphism.
\end{proof}

The functor $P_0 \mapsto R \otimes_{R^D} P_0$ also induces a group homomorphism $PC(R^D) \to PC(R)$. This need not be either injective or surjective, in general. However Proposition \ref{units} enables the identification of its kernel.

\begin{lemma} \label{inductionkernel} The kernel of $$\phi: PC(R^D) \to PC(R)$$ is identified with $U(FC^\text{diff}(R))$  via the isomorphism $q$ of Proposition \ref{units},
\end{lemma}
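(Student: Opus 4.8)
The plan is to realize $\phi$ as a composite through $q$ and the forgetful epimorphism $p\colon PC^\text{diff}(R)\to PC(R)$ of Proposition \ref{kerneltoK}, and then to compute the image $q(\ker\phi)$ directly. Writing $\iota\colon U(PC^\text{diff}(R))\hookrightarrow PC^\text{diff}(R)$ for the inclusion, the definitions of $q$ and $\phi$ give $\phi=p\circ\iota\circ q$, since both send the class of a finitely generated projective $R^D$ module $P_0$ to the class of the underlying $R$ module of $R\otimes_{R^D}P_0$. Because $q$ is the isomorphism of Proposition \ref{units}, it follows that $q(\ker\phi)=\{u\in U(PC^\text{diff}(R)) : p(u)=0\}$, and it suffices to identify this subgroup, call it $V$, with the image of $U(FC^\text{diff}(R))$ under the natural map $FC^\text{diff}(R)\to PC^\text{diff}(R)$.

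A preliminary step I would record is that this natural map $FC^\text{diff}(R)\to PC^\text{diff}(R)$ is injective. The reason is that both monoids are quotients by the \emph{same} submonoid $F_0^\text{diff}(R)$, and if two classes of free differential modules become equal in $PC^\text{diff}(R)$, the witnessing differential isomorphism is between modules that are free as $R$ modules and hence already lives in $F^\text{diff}(R)$. With this in hand I may regard $FC^\text{diff}(R)$, and in particular $U(FC^\text{diff}(R))$, as a submonoid of $PC^\text{diff}(R)$. The inclusion $U(FC^\text{diff}(R))\subseteq V$ is then immediate: a unit of $FC^\text{diff}(R)$ maps to a unit of $PC^\text{diff}(R)$ since the natural map is a monoid homomorphism, and it lies in $\ker p$ because $FC^\text{diff}(R)$ is precisely the submonoid carried to $0$ by $p$ in Proposition \ref{kerneltoK}.

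The reverse inclusion $V\subseteq U(FC^\text{diff}(R))$ is the substance. Given $u\in V$, write $u=q([P_0])=[R\otimes_{R^D}P_0]$; since $p(u)=0$, the module $R\otimes_{R^D}P_0$ is stably free, so $(R\otimes_{R^D}P_0)\oplus(R^n,0)$ is differentially isomorphic to some $(R^m,A)$, which exhibits $u$ as the image of a class of $FC^\text{diff}(R)$. Applying the same reasoning to $u^{-1}=q([P_0]^{-1})$, which also lies in $V$ because $p$ restricts to a group homomorphism on units and $p(u^{-1})=-p(u)=0$, shows that $u^{-1}$ too is the image of a class of $FC^\text{diff}(R)$. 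Finally the relation $u+u^{-1}=0$ holds in $PC^\text{diff}(R)$, and since all the modules involved are free as $R$ modules this relation descends, via the injectivity above, to the corresponding relation in $FC^\text{diff}(R)$; hence the chosen preimage of $u$ is already a unit of $FC^\text{diff}(R)$.

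I expect the main obstacle to be exactly this last point. It is not enough to show that $u\in V$ lies in the image of $FC^\text{diff}(R)$, because a submonoid can contain non-units whose image happens to be invertible in the ambient monoid; one must produce the inverse \emph{inside} $FC^\text{diff}(R)$ and verify that the equation witnessing invertibility is already a differential isomorphism of free modules. The leverage that makes this work is that the inverse of $u$ is itself realized through $q$ out of the group $PC(R^D)$ (Proposition \ref{reducedK} guarantees $PC(R^D)$ is a group), so it is forced into $V$ as well, and the relevant cancellation then takes place entirely among modules that are free as $R$ modules. Combining the two inclusions yields $q(\ker\phi)=U(FC^\text{diff}(R))$, which is the asserted identification.
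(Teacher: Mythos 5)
Your proposal is correct and takes essentially the same route as the paper: identify $q(\ker\phi)$ with the set $FC^\text{diff}(R)\cap U(PC^\text{diff}(R))$ (your $V$) using the factorization of $\phi$ through $q$ and the forgetful map, observe that one inclusion is clear, and obtain the other from the fact that inverses of elements of this set again lie in $FC^\text{diff}(R)$, so the set sits inside $U(FC^\text{diff}(R))$. The only genuine difference is that you explicitly verify the injectivity of the natural map $FC^\text{diff}(R)\to PC^\text{diff}(R)$, a point the paper uses implicitly when it treats $FC^\text{diff}(R)$ as a submonoid of $PC^\text{diff}(R)$; this is a worthwhile clarification but not a different argument.
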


\begin{proof}
The inclusion $R^D \to R$ induces the group homomorphism $$\phi: PC(R^D) \to PC(R).$$ 

The kernel of $\phi$ consists of classes of finitely generated projective $R^D$ modules $P_0$ such that $P=R \otimes_{R^D} P_0$ is a stably free $R$ module: there are $m$, $n$ such that $P \oplus R^n= R \otimes_{R^D} (P_0 \oplus (R^D)^n)$ is isomorphic to $R^m$. Since $P_0$ and $P_0 \oplus (R^D)^n$ give rise to the same class in $PC(R^D)$, we can replace the former by the latter and assume that $P$ is actually $R$ free. So the class of $P$ as a differential module lies in $FC^\text{diff}(R)$. Since it comes from $PC(R^D)$ it also lies in $U(PC^\text{diff}(R))$ and hence in $I=FC^\text{diff}(R) \cap U(PC^\text{diff}(R))$. Thus $q(\text{Ker}(\phi)) \subseteq I$; since  $FC^\text{diff}(R)$ is the kernel of
the morphism $PC^\text{diff}(R) \to PC(R)$ (see Proposition \ref{kerneltoK}), we have that $q(\text{Ker}(\phi)) \cong I$. We claim that $I=U(FC^\text{diff}(R))$, which will complete the proof. That $U(FC^\text{diff}(R))$ is contained in $I$ is clear. On the other hand $I$, being the kernel of a group homomorphism, is a (sub)group in $FC^\text{diff}(R)$ and hence contained in its group of units.
\end{proof}

Using Proposition \ref{units} we have the following consequences of Corollaries \ref{trivialFCM} and \ref{trivialPCMR}:

\begin{corollary} \label{trivialFCMiso} If $FC^\text{diff}(R)=0$ then $PC(R^D) \to PC(R)$ is an isomorphism.
\end{corollary}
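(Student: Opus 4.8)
The plan is to factor the induction map $\phi$ through the differential projective class monoid $PC^\text{diff}(R)$ and to recognize each factor as an isomorphism under the hypothesis $FC^\text{diff}(R)=0$. The key observation is that $\phi$ was defined via the functor $P_0 \mapsto R \otimes_{R^D} P_0$ into projective $R$ modules, which is exactly the functor underlying $q$ of Proposition \ref{units} followed by forgetting the derivation. Concretely, $\phi$ equals the composite
\[
PC(R^D) \xrightarrow{\ q\ } PC^\text{diff}(R) \longrightarrow PC(R),
\]
where the second arrow is the forgetful epimorphism that strips the differential structure. The only point requiring care here is this factorization itself: one must check that forgetting the derivation commutes with extension of scalars along $R^D \to R$, so that the $R$ module underlying $R \otimes_{R^D} P_0$ (viewed as a differential module, then forgotten) agrees with $R \otimes_{R^D} P_0$ (viewed as an ordinary $R$ module). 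This is essentially definitional, and I expect it to be the main thing to pin down, since everything afterward is formal.

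Once the factorization is in place, I would invoke the hypothesis. Since $FC^\text{diff}(R)=0$, Corollary \ref{trivialFCM} gives $PC^\text{diff}(R)=PC(R)$, so the forgetful map $PC^\text{diff}(R) \to PC(R)$ is an isomorphism. By Proposition \ref{reducedK}, $PC(R)$ is a group, hence so is $PC^\text{diff}(R)$; in particular every element of $PC^\text{diff}(R)$ is invertible, so $U(PC^\text{diff}(R)) = PC^\text{diff}(R)$. Proposition \ref{units} then identifies $q$ as a group isomorphism $PC(R^D) \to U(PC^\text{diff}(R)) = PC^\text{diff}(R)$.

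Therefore $\phi$ is a composite of two isomorphisms and is itself an isomorphism. As an independent check on injectivity, one can apply Lemma \ref{inductionkernel} directly: its kernel is identified with $U(FC^\text{diff}(R)) = U(0) = 0$. Since no genuine difficulty arises beyond the functorial compatibility noted above, the proof reduces to assembling Corollary \ref{trivialFCM} and Proposition \ref{units}, with the observation that a group has all elements invertible supplying the passage from $U(PC^\text{diff}(R))$ to $PC^\text{diff}(R)$.
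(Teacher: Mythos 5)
Your proof is correct and follows essentially the same route as the paper's: invoke Corollary \ref{trivialFCM} to identify $PC^\text{diff}(R)$ with $PC(R)$, use Proposition \ref{reducedK} to conclude $PC^\text{diff}(R)$ is a group so that $U(PC^\text{diff}(R))=PC^\text{diff}(R)$, and then apply Proposition \ref{units}. Your explicit factorization of $\phi$ through $q$ and the forgetful map, and the extra injectivity check via Lemma \ref{inductionkernel}, merely make precise what the paper leaves implicit.
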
 

\begin{proof} Suppose $FC^\text{diff}(R)=0$. By Corollary \ref{trivialFCM}, $PC^\text{diff}(R)=PC(R)$. By Proposition \ref{reducedK}, $PC(R)$ is a group, hence $PC^\text{diff}(R)=U(PC^\text{diff}(R))$, so by Proposition \ref{units} $PC(R^D)$ maps isomorphically to $PC^\text{diff}(R)$ and hence $PC(R^D) \to PC(R)$ is an isomorphism.
\end{proof}

\begin{corollary} \label{trivialPCMs} If $PC(R^D)=0$ then $PC^\text{diff}(R)$ has no invertible elements.
\end{corollary}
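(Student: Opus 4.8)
The plan is to deduce this directly from Proposition \ref{units}, which already packages all the necessary content. Recall that Proposition \ref{units} asserts that the functor $P_0 \mapsto R \otimes_{R^D} P_0$ induces a group isomorphism $q: PC(R^D) \to U(PC^\text{diff}(R))$, where $U(PC^\text{diff}(R))$ denotes the maximal subgroup of invertible elements of the monoid $PC^\text{diff}(R)$. Thus the hypothesis $PC(R^D)=0$ should translate, under $q$, into a statement purely about the units of $PC^\text{diff}(R)$.

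First I would observe that if $PC(R^D)=0$, then, $q$ being an isomorphism of groups, its image $U(PC^\text{diff}(R))$ is also the trivial group, consisting only of the identity. By definition $U(PC^\text{diff}(R))$ collects exactly the invertible elements of $PC^\text{diff}(R)$, so its triviality is precisely the assertion that the only invertible element of $PC^\text{diff}(R)$ is the identity; that is, $PC^\text{diff}(R)$ has no nontrivial invertible elements. This completes the argument.

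There is essentially no obstacle here, since the substantive work has already been carried out in establishing Proposition \ref{units}: the corollary is a one-line consequence, entirely parallel in spirit to the way Corollary \ref{trivialFCMiso} extracts its conclusion from the same isomorphism. The only point requiring any care is the bookkeeping of the definitions, namely confirming that the phrase ``$PC^\text{diff}(R)$ has no invertible elements'' is read in the sense of the earlier remark that a monoid $M/U(M)$ ``has no invertible elements except the identity,'' so that the claim amounts to $U(PC^\text{diff}(R))=0$ rather than to the literal absence of the identity itself.
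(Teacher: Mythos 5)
Your proof is correct and takes essentially the same route as the paper: the paper's own proof is the one-liner ``Combine Corollary \ref{trivialPCMR} and Proposition \ref{units},'' and the substantive content is exactly what you use, namely that $PC(R^D)=0$ forces $U(PC^\text{diff}(R)) \cong PC(R^D)$ to be the trivial group, with ``no invertible elements'' read as ``no nontrivial invertible elements.'' If anything your version is leaner, since Proposition \ref{units} alone already yields the conclusion and the paper's additional appeal to Corollary \ref{trivialPCMR} is not actually needed.
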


\begin{proof} Combine Corollary \ref{trivialPCMR} and Proposition \ref{units}.
\end{proof}

Proposition \ref{kerneltoK} implies that if $PC^\text{diff}(R)=0$ then $PC(R)=0$. Of course if $PC^\text{diff}(R)=0$ then its submonoid $FC^\text{diff}(R)=0$. In this case,  Corollary \ref{trivialFCMiso} further implies that $PC(R^D) = PC(R) =0$. We record this fact, along with the similar assertion under the stronger assumption that all differential projective $R$ modules are trivial:

\begin{corollary} \label{alltrivial} If $PC^\text{diff}(R)=0$ then  $PC(R^D) = PC(R) =0$. If all finitely generated differential projective $R$ modules are differentially trivial, then all finitely generated projective $R$ modules and all finitely generated projective $R^D$ modules are free.
\end{corollary}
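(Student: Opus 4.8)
The plan is to treat the two assertions separately, noting that the first is essentially assembled already in the discussion preceding the statement, while the second requires upgrading \emph{stably free} to \emph{free} by exploiting the stronger hypothesis directly.

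For the first assertion I would start from $PC^\text{diff}(R)=0$. Since $FC^\text{diff}(R)$ is a submonoid of $PC^\text{diff}(R)$, it too vanishes. Proposition \ref{kerneltoK} supplies an isomorphism $PC^\text{diff}(R)/FC^\text{diff}(R) \cong PC(R)$; as the left-hand side is a quotient of the trivial monoid, $PC(R)=0$. Finally, with $FC^\text{diff}(R)=0$ in hand, Corollary \ref{trivialFCMiso} says that $PC(R^D) \to PC(R)$ is an isomorphism, so $PC(R^D)=0$ as well. This disposes of the first claim without further work.

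For the second assertion I would argue that each kind of module is not merely stably free but actually free. Let $P$ be a finitely generated projective $R$ module. Because the forgetful map $P^\text{diff}(R) \to P(R)$ is surjective \cite{jm2}, $P$ carries some differential structure $D_P$. By hypothesis $(P,D_P)$ is differentially trivial, hence differentially isomorphic to $(R^n,0)$ for some $n$; forgetting the derivation yields an $R$ module isomorphism $P \cong R^n$, so $P$ is free. For the constants, let $P_0$ be a finitely generated projective $R^D$ module and form the differential $R$ module $R \otimes_{R^D} P_0$ as in Proposition \ref{units}. By hypothesis this is differentially trivial, so differentially isomorphic to $(R^m,0) = R \otimes_{R^D} (R^D)^m$. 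Applying the constants functor together with the lemma of \cite{jm2} used in Proposition \ref{units} (which identifies $(R \otimes_{R^D} P_0)^D$ with $P_0$) gives $P_0 \cong (R^D)^m$, so $P_0$ is free.

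The one real subtlety, and the step I would watch most carefully, is ensuring that the stronger hypothesis of the second assertion delivers genuine freeness rather than the stable freeness that $PC^\text{diff}(R)=0$ alone would yield through the first assertion. The point is that being \emph{differentially trivial} means being differentially isomorphic to some $(R^n,0)$ on the nose, with no summand added; the $R$ module isomorphism $P \cong R^n$ (respectively the $R^D$ module isomorphism $P_0 \cong (R^D)^m$) then follows immediately once the derivation is forgotten (respectively once constants are taken). I would also take care that applying the constants functor to a differential isomorphism produces an honest $R^D$ module isomorphism, which is exactly the content of the cited \cite{jm2} lemma.
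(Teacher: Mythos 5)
Your proposal is correct and follows essentially the same route as the paper: the first assertion is exactly the paper's chain (submonoid $FC^\text{diff}(R)=0$, Proposition \ref{kerneltoK} for $PC(R)=0$, Corollary \ref{trivialFCMiso} for $PC(R^D)=0$), and the second assertion matches the paper's proof, which endows an arbitrary projective $R$ module with a differential structure and applies the constants functor (the cited lemma of \cite{jm2}) to $R \otimes_{R^D} P_0$.
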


\begin{proof} For the second assertion, let $P$ be a finitely generated projective $R$ module. Let $D_P$ be a differential structure on $P$. Since $P$ with $D_P$ is differentially trivial, its underlying module $P$ is free. Let $Q$ be a finitely generated projective $R^D$ module. Since $R \otimes_{R^D} Q$ is differentially trivial, it is differentially isomorphic to $(R^n,0)$ for some $n$. It follows that $Q=(R \otimes_{R^D} Q)^D=(R^n,0)^D=(R^D)^n$.
\end{proof}

In Proposition \ref{constantsfree} below we will prove a partial converse of Corollary \ref{alltrivial}

\section{Trivial Differential Projective Class Monoid}

We consider the condition that $PC^\text{diff}(R)=0$, or that all projective differential modules are stably differentially trivial. 
We will show that this condition is much more restrictive than the (non-differential) condition that all projective $R$-modules be stably free.

If $PC^\text{diff}(R)=0$, then the submonoid $FC^\text{diff}(R)$ is also trivial. A partial converse also holds:

\begin{proposition}  \label{trivialimpliestrivial}  If $FC^\text{diff}(R)=0$ and $PC(R^D)=0$ then $PC^\text{diff}(R)=0$.
\end{proposition}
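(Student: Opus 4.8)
The plan is to observe that the two hypotheses are exactly the inputs required to feed the chain of identifications already assembled earlier in this section, so that no fresh differential-module computation is needed. The goal is to show that the whole monoid $PC^\text{diff}(R)$ collapses to $0$, i.e. that every differential finitely generated projective $R$ module is stably differentially trivial. Rather than argue module by module, I would route everything through the two comparison maps $PC^\text{diff}(R) \to PC(R)$ (forgetting the derivation) and $PC(R^D) \to PC(R)$ (induction), and exploit that the hypothesis $FC^\text{diff}(R)=0$ trivializes the discrepancy between each of these and the common target $PC(R)$.

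First I would invoke Corollary \ref{trivialFCM}: since $FC^\text{diff}(R)=0$, the epimorphism $PC^\text{diff}(R) \to PC(R)$ of Proposition \ref{kerneltoK} is in fact an isomorphism, so $PC^\text{diff}(R)=PC(R)$. Next I would invoke Corollary \ref{trivialFCMiso}, which uses the same hypothesis $FC^\text{diff}(R)=0$ together with Proposition \ref{units} to conclude that the induction map $PC(R^D) \to PC(R)$ is an isomorphism. Feeding in the second hypothesis $PC(R^D)=0$ then forces $PC(R)=0$, and combining this with the first identification yields $PC^\text{diff}(R)=PC(R)=0$, which is the assertion.

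The step that carries the real weight is the previously established Corollary \ref{trivialFCMiso}, since that is where $FC^\text{diff}(R)=0$ is converted, via Proposition \ref{units} and the identification of $PC(R^D)$ with $U(PC^\text{diff}(R))$, into an honest isomorphism of groups; the present proposition is then essentially a bookkeeping composition of isomorphisms. The one point I would check carefully is the consistency of the two appearances of the hypothesis $FC^\text{diff}(R)=0$: both $PC^\text{diff}(R)=PC(R)$ and $PC(R^D)\cong PC(R)$ are realized through maps into the single target $PC(R)$, so there is no ambiguity in composing them, and no circularity, because Corollaries \ref{trivialFCM} and \ref{trivialFCMiso} were derived independently from Propositions \ref{kerneltoK}, \ref{reducedK}, and \ref{units}. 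A seemingly more elementary alternative would take a differential projective $P$ and use $FC^\text{diff}(R)=0$ to stabilize $P\oplus(R^n,0)$ into free form and then kill its class; but unwinding that argument simply reproves the two corollaries, so the composition route is both cleaner and honest about where the work actually lives.
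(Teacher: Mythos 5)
Your proof is correct, but it is not the route the paper takes: the paper argues directly at the level of modules, while you obtain the proposition as a formal composition of results already on record. Concretely, the paper's proof takes an arbitrary differential finitely generated projective $P$, invokes \cite[Corollary 3, p. 4342]{jm2} to produce a differential projective $Q$ with $P \oplus Q$ free as an $R$ module, notes that $FC^\text{diff}(R)=0$ forces the class of $P \oplus Q$ in $PC^\text{diff}(R)$ to vanish --- so the class of $P$ is invertible --- and then kills all units at once via Proposition \ref{units} and the hypothesis $PC(R^D)=0$. Your argument instead composes Corollary \ref{trivialFCM} (giving $PC^\text{diff}(R)=PC(R)$) with Corollary \ref{trivialFCMiso} (giving that $PC(R^D)\to PC(R)$ is an isomorphism) and feeds in $PC(R^D)=0$. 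Both arguments are sound and non-circular, and both ultimately rest on Proposition \ref{units}; the difference is that your route passes through $PC(R)$ and its group structure (Proposition \ref{reducedK}, hidden inside Corollary \ref{trivialFCMiso}), whereas the paper's proof never mentions $PC(R)$ at all and makes visible exactly where invertibility comes from, namely the existence of differential complements. One small point: your closing remark mischaracterizes the ``more elementary alternative.'' Unwinding that direct argument does not simply reprove the two corollaries --- it is leaner than your route, needing neither Proposition \ref{reducedK} nor Proposition \ref{kerneltoK}, only \cite{jm2} together with Proposition \ref{units}; indeed, that direct argument is precisely the proof the paper gives.
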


\begin{proof} Suppose $FC^\text{diff}(R)=0$ and let $P$ be a finitely generated differential projective module. Then there is a differential projective $Q$ such that $P \oplus Q$ is free as an $R$ module so, as an element of $FC^\text{diff}(R)$, $P \oplus Q$  must be $0$. Thus the class of $P$ is invertible in $PC^\text{diff}(R)$. Proposition \ref{units} and the assumption that $PC(R^D)=0$ imply that the class of $P$ is $0$ in $PC^\text{diff}(R)$.
\end{proof} 

Proposition \ref{trivialimpliestrivial} says that if all differential $R$ modules which are free as $R$ modules are stably differentially trivial, and if all projective $R^D$ modules are stably free, then all differential projective $R$ modules are stably differentially trivial. This is comparable to the corresponding non--differential condition that all projective modules are stably free. The stronger condition, namely that all projective modules are free, has often been investigated; see for example \cite{lam}. The differential analogue would be the condition that all differential protectives modules are differentially trivial. As it happens, the differential and non--differential versions of this condition are related:

\begin{proposition} \label{constantsfree}  If all finitely generated projective $R^D$ modules are free, then every stably differentially trivial finitely generated differential projective $R$ module $P$ is differentially trivial.
\end{proposition}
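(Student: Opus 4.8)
The plan is to reduce the statement to the representation theorem for differential direct summands of trivial modules, \cite[Theorem 1, p. 4340]{jm2}, exactly as in the proof of Proposition \ref{units}, and then invoke the freeness hypothesis on $R^D$ modules. First I would unwind the hypothesis that $P$ is stably differentially trivial: by definition this provides integers $m, n$ and a differential isomorphism $P \oplus (R^n, 0) \cong (R^m, 0)$. Since $P$ is a differential direct summand of $P \oplus (R^n,0)$, transporting that summand across the isomorphism exhibits $P$, up to differential isomorphism, as a differential direct summand of the trivial module $(R^m, 0)$.

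Next I would apply \cite[Theorem 1, p. 4340]{jm2}, which says precisely that a differential direct summand of a trivial module $(R^m, 0)$ is differentially isomorphic to $R \otimes_{R^D} P_0$ for some finitely generated projective $R^D$ module $P_0$. This gives a differential isomorphism $P \cong R \otimes_{R^D} P_0$. Now I invoke the hypothesis: every finitely generated projective $R^D$ module is free, so $P_0 \cong (R^D)^k$ for some $k$. Then $P \cong R \otimes_{R^D} (R^D)^k \cong R^k$, and since the derivation induced on an extension $R \otimes_{R^D} P_0$ acts only on the $R$ factor (via $D(r \otimes p_0)=D_R(r) \otimes p_0$), the induced differential structure on the free basis coming from $(R^D)^k$ is componentwise. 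Hence $P \cong (R^k, 0)$ differentially, which is exactly the assertion that $P$ is differentially trivial.

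The only step carrying genuine content is the appeal to \cite[Theorem 1, p. 4340]{jm2}; everything else is the definition of stable triviality together with the bookkeeping that the induced differential structure on a free constant module is the trivial one. Correspondingly, the only point requiring care is verifying that the differential isomorphism $P \cong R \otimes_{R^D} P_0$ does carry a trivializing basis of $P_0$ to a componentwise-differential basis of $P$; but this is immediate from the form of the derivation on induced modules. I should also note the shape of the argument relative to Proposition \ref{units}: there one starts from the weaker hypothesis that the class of $P$ is merely invertible and concludes only that $P$ is induced from the constants, whereas here the stronger hypothesis of stable differential triviality, combined with freeness over $R^D$, upgrades that conclusion all the way to differential triviality of $P$ itself.
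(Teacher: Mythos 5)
Your proposal is correct and follows the paper's own proof essentially verbatim: unwind stable differential triviality to exhibit $P$ as a differential direct summand of a trivial module, apply \cite[Theorem 1, p.~4340]{jm2} to write $P \cong R \otimes_{R^D} P_0$ with $P_0$ finitely generated projective over $R^D$, and then use the freeness hypothesis to conclude $P \cong (R^k,0)$. The extra bookkeeping you include about the induced derivation acting only on the $R$ factor is implicit in the paper but is the right thing to check.
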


\begin{proof} For a differential $R$ module $P$ to be stably differentially trivial means  that there are $m,n$ such that $P \oplus (R^m,0)$ is isomorphic to $(R^n,0)$. In particular, $P$ is a differential direct summand of a trivial differential module. By \cite[Theorem 1 p. 4340]{jm2} this implies that $P$ is of the form $R \otimes_{R^D} P_0$ for some finitely generated projective $R^D$ module $P_0$. Since by assumption $P_0$ is free, say for rank $k$ (where of course $k=n-m$), $P$ is differentially isomorphic to $(R^k, 0)$ and hence differentially trivial. 
\end{proof}

If $\eta: R \to S$ is a differential ring homomorphism, then there is an induced monoid homomorphism $h: PC^\text{diff}(R) \to PC^\text{diff}(S)$ which carries the submonoid $FC^\text{diff}(R)$ of $PC^\text{diff}(R)$ to the submonoid 
$FC^\text{diff}(S)$ of $PC^\text{diff}(S)$. Even if $\eta$ is surjective, $h$ need not be surjective, but its restriction  to $FC^\text{diff}(R)$ does surject onto $FC^\text{diff}(S)$. If it happens that $PC(S^D)=0$, then we can use this to detect whether $PC^\text{diff}(R) \neq 0$.

\begin{corollary} \label{nontivialimpliesnontrivial} Let $\eta: R \to S$ be a surjective differential homomorphism of differential rings. Assume $PC(S^D)=0$. If $PC^\text{diff}(R) = 0$ then $PC^\text{diff}(S) = 0$.
\end{corollary}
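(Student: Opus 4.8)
The plan is to reduce the statement to Proposition \ref{trivialimpliestrivial} applied over $S$. The point is that the hypothesis $PC^\text{diff}(R)=0$ is far stronger than what we actually need to feed into that proposition: it immediately forces the submonoid $FC^\text{diff}(R)$ to vanish as well, and vanishing of $FC^\text{diff}$ transports along $\eta$.

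First I would observe that, since $FC^\text{diff}(R)$ is a submonoid of $PC^\text{diff}(R)$, the assumption $PC^\text{diff}(R)=0$ gives $FC^\text{diff}(R)=0$. Next I would invoke the fact recorded in the paragraph immediately preceding the corollary: the induced homomorphism $h\colon PC^\text{diff}(R)\to PC^\text{diff}(S)$ restricts to a surjection $FC^\text{diff}(R)\twoheadrightarrow FC^\text{diff}(S)$. This surjectivity is exactly where the hypothesis that $\eta$ is onto enters: every matrix $B$ over $S$ lifts to a matrix $A$ over $R$ with $\eta(A)=B$, so the class of $(S^n,B)$ is the image under $h$ of the class of $(R^n,A)$. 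Because the source $FC^\text{diff}(R)$ is zero, its surjective image $FC^\text{diff}(S)$ is zero as well.

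Finally I would apply Proposition \ref{trivialimpliestrivial} with $R$ replaced by $S$. We have just established $FC^\text{diff}(S)=0$, while the remaining hypothesis $PC(S^D)=0$ is assumed outright. The conclusion of that proposition then yields $PC^\text{diff}(S)=0$, which is what we want.

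I do not expect a genuine obstacle here: the argument is a short chain of implications, with all the real content already packaged into Proposition \ref{trivialimpliestrivial} and into the surjectivity of $h$ on free-module classes. The only step meriting a moment's care is the claim that $h|_{FC^\text{diff}(R)}$ truly hits all of $FC^\text{diff}(S)$, since it is precisely there that surjectivity of $\eta$ is used; but this is the statement asserted just before the corollary, so it can be cited directly rather than re-derived.
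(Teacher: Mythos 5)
Your proof is correct and follows essentially the same route as the paper's: pass from $PC^\text{diff}(R)=0$ to $FC^\text{diff}(R)=0$, use surjectivity of $\eta$ to get $FC^\text{diff}(S)=0$, and then apply Proposition \ref{trivialimpliestrivial} over $S$. The only difference is that you spell out the matrix-lifting argument behind the surjection $FC^\text{diff}(R)\to FC^\text{diff}(S)$, which the paper simply cites from the preceding paragraph.
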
 

\begin{proof}  If $PC^\text{diff}(R)=0$, then  $FC^\text{diff}(R)=0$. Since $\eta$ is surjective, this implies that $FC^\text{diff}(S)=0$. Then by Proposition \ref{trivialimpliestrivial} again, $PC^\text{diff}(S)=0$.
\end{proof}

If $S$ is a simple differential ring, then $S^D$ is a field \cite[Lemma 1.3.6, p. 11]{w}, so all projective $S$ modules are free, and in particular $PC(S^D)=0$. So if $R \to S$ is a surjection with $S$ simple, and if $PC^\text{diff}(R)=0$, then $PC^\text{diff}(S)=0$.

\begin{corollary} \label{simplequotient} Let $R$ be a differential ring with $PC^\text{diff}(R)=0$. Then for every maximal differential ideal $I$ of $R$, $PC^\text{diff}(R/I)=0$.
\end{corollary}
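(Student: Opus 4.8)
The plan is to recognize this as a direct instance of Corollary \ref{nontivialimpliesnontrivial}, applied to the quotient map $\eta \colon R \to R/I$, so that the whole task reduces to verifying its two hypotheses for $S = R/I$. The map $\eta$ is by construction a surjective differential ring homomorphism, so that requirement of Corollary \ref{nontivialimpliesnontrivial} holds automatically, and $PC^\text{diff}(R)=0$ is given. The only remaining thing to check is the condition $PC(S^D)=0$.

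To obtain $PC(S^D)=0$, I would first argue that $S = R/I$ is a \emph{simple} differential ring. This is the standard ideal correspondence: the differential ideals of $R/I$ correspond to the differential ideals of $R$ containing $I$, and since $I$ is a maximal differential ideal there are none lying properly between $I$ and $R$; hence $R/I$ has no nonzero proper differential ideal. One should also note that maximality includes $I \neq R$, so that $R/I \neq 0$ and the notion of simplicity is the intended one.

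Once $S$ is known to be simple, I invoke \cite[Lemma 1.3.6, p. 11]{w} to conclude that the ring of constants $S^D = (R/I)^D$ is a field. Over a field every finitely generated module is free, so $F(S^D)=P(S^D)$ and therefore $PC(S^D)=P(S^D)/F(S^D)=0$. This supplies the last missing hypothesis, exactly as in the paragraph preceding the statement.

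With all three conditions in hand, namely $\eta$ surjective, $PC^\text{diff}(R)=0$, and $PC(S^D)=0$, Corollary \ref{nontivialimpliesnontrivial} yields $PC^\text{diff}(R/I)=0$. There is essentially no obstacle beyond assembling the pieces: the genuine content has already been carried by Corollary \ref{nontivialimpliesnontrivial} (which in turn rests on Proposition \ref{trivialimpliestrivial}) and by the constants-are-a-field lemma. If anything requires care, it is only confirming that the definition of \emph{maximal differential ideal} in force yields precisely simplicity of the quotient, so that \cite{w} applies to guarantee that $S^D$ is a field.
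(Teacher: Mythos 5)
Your proposal is correct and follows exactly the paper's own route: the paper proves this corollary in the paragraph immediately preceding it, by observing that $R/I$ is a simple differential ring, invoking \cite[Lemma 1.3.6, p. 11]{w} to get that $(R/I)^D$ is a field (hence $PC((R/I)^D)=0$), and then applying Corollary \ref{nontivialimpliesnontrivial} to the surjection $R \to R/I$. Your write-up in fact spells out the ideal-correspondence step for simplicity of $R/I$, which the paper leaves implicit.
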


Note that in general $R/I$, for $I$ a maximal differential ideal, may have differential projective modules which are not obtained from differential projective $R$ modules by reduction modulo $I$. Indeed every differential $R$ module, projective or not, reduces to a differential projective $R/I$ module since every finitely generated  differential module over the simple differential ring $R/I$ is a projective $R/I$ module \cite[Theorem 2.2.1. p. 455]{a}. 

We consider as examples a class of differential rings whose differential projective class monoids are \emph{never} trivial.

We will be using some facts about Picard--Vessiot extensions, which we now briefly recall (\cite[Chapter 1]{svp}). Let $F$ be a characteristic zero differential field with algebraically closed field of constants $C$. Let $P=(F^n ,A)$ be a finitely generated differential projective $F$ module. A  differential field extension $E \supseteq F$ is called a \emph{Picard--Vessiot extension} of $F$ for $P$ provided: (1) $E^D=C$; (2) $E \otimes_F P$ is differentially trivial; and (3) No proper differential subfield of $E$ containing $F$ satisfies (1) and (2). Picard--Vessiot extensions exist and are unique up to $F$-isomorphism. Now suppose $P_1$ and $P_2$ are finitely generated projective differential $F$ modules which represent the same class in $PC^\text{diff}(F)$. That is, there are $m,n$ such that $P_1 \oplus (F^m, 0)$ and $P_2 \oplus (F^n, 0)$ are differentially isomorphic. Let $E_i$ be a Picard--Vessiot extension for $P_i$. Since $E_1 \otimes_F P_1$ has a basis of constants, so does 
$E_1 \otimes_F (P_1 \oplus (F^m,0))$. Because of the isomorphism, $E_1 \otimes_F (P_2 \oplus (F^n,0))$ also has a basis of constants. In other words, $E_1 \otimes_F (P_2 \oplus (F^n,0))$ is differentially trivial. Thus $E_1 \otimes _F P_2$ is stably differentially trivial, and then by Proposition \ref{constantsfree} $E_1 \otimes_F P_2$ is differentially trivial, i.e. has a basis of constants. Since also $E_1^D=C$, $E_1$ contains a Picard--Vessiot extension for $P_2$. Similarly, $E_2$ contains a Picard--Vessiot extension of $P_1$. We conclude that $E_1$ and $E_2$ are isomorphic differential $F$ algebras. Suppose we have chosen all Picard--Vessiot extensions of $F$ inside the same Picard--Vessiot closure of $F$. That is, inside a differential field extension of $F$ with the same constants as $F$ which is a union of Picard--Vessiot extensions of $F$ and which contains an isomorphic copy of every Picard--Vessiot extension of $F$. (See \cite{m3} for the proof that Picard--Vessiot closures exist.) Then we have that $E_1=E_2$. Conversely, if the Picard--Vessiot extensions for $P_1$ and $P_2$ are not equal, then $P_1$ and $P_2$ have different classes in $PC^\text{diff}(F)$. This same reasoning applies to sets of differential $F$ modules with more than two members, indeed to such sets of any cardinality:

\begin{proposition} \label{bigmonoid} Let $F$ be a characteristic zero differential field with algebraically closed field of constants $C$. Let $\{ P_i \vert i \in I \}$ be a set of finitely generated differential projective $F$ modules and for each $i \in I$ let $E_i$ be a Picard--Vessiot extension of $F$ for $P_i$, all inside a chosen Picard--Vessiot closure of $F$. Suppose all the $E_i$'s are distinct. Then the classes of the $P_i$'s are distinct elements of $PCM^\text{diff}(F)$.
\end{proposition}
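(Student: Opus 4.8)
The plan is to reduce this to the pairwise statement that was worked out in full in the discussion immediately preceding the proposition, and then observe that pairwise distinctness is precisely the assertion. Since the elements of a set are distinct exactly when they differ in pairs, it suffices to show that for any two indices $i \neq j$ in $I$ we have $[P_i] \neq [P_j]$ in $PC^\text{diff}(F)$. I would argue by contraposition: assuming $[P_i]=[P_j]$, I would deduce $E_i = E_j$, contradicting the hypothesis that all the $E_i$ are distinct.

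First I would unwind the equality $[P_i]=[P_j]$ using the description of $PC^\text{diff}(F)$ as a quotient monoid: there are integers $m,n$ and a differential isomorphism $P_i \oplus (F^m,0) \cong P_j \oplus (F^n,0)$. Next I would base change along $F \to E_i$. Because $E_i$ is a Picard--Vessiot extension for $P_i$, the module $E_i \otimes_F P_i$ is differentially trivial, and since $(F^m,0)$ becomes $(E_i^m,0)$ after base change, $E_i \otimes_F (P_i \oplus (F^m,0))$ is differentially trivial as well. Transporting this triviality across the isomorphism shows $E_i \otimes_F (P_j \oplus (F^n,0))$ is differentially trivial, whence $E_i \otimes_F P_j$ is \emph{stably} differentially trivial. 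Here I would invoke Proposition \ref{constantsfree}: since $E_i^D = C$ is a field, all finitely generated projective $C$-modules are free, so stable differential triviality upgrades to genuine differential triviality. Thus $E_i \otimes_F P_j$ has a basis of constants while $E_i^D = C$, so $E_i$ satisfies the defining conditions (1) and (2) of a Picard--Vessiot extension for $P_j$, and therefore contains one.

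The crux, and the step I expect to require the most care, is converting ``$E_i$ contains a Picard--Vessiot extension for $P_j$'' into the honest inclusion $E_j \subseteq E_i$. This is exactly where the hypothesis that all the $E_i$ sit inside a single chosen Picard--Vessiot closure of $F$ does the work: a Picard--Vessiot extension for $P_j$ is unique up to $F$-isomorphism, and two copies inside the same closure must coincide, so the copy contained in $E_i$ is precisely $E_j$. Running the symmetric argument with $i$ and $j$ interchanged yields $E_i \subseteq E_j$, and hence $E_i = E_j$, contradicting distinctness. The only genuine mathematical input beyond this bookkeeping is Proposition \ref{constantsfree}; the rest is the normalization afforded by fixing a common closure at the outset, which is what makes the arbitrary-cardinality statement follow with no more effort than the two-element case.
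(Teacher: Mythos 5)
Your proposal is correct and follows essentially the same route as the paper: contraposition on a pair of indices, base change to $E_i$ to get stable differential triviality of $E_i \otimes_F P_j$, upgrade to genuine triviality via Proposition \ref{constantsfree} (since $E_i^D=C$ is a field), deduce mutual containment of Picard--Vessiot extensions, and use the common Picard--Vessiot closure to turn this into equality $E_i=E_j$. The paper presents this pairwise argument in the discussion preceding the proposition and notes it extends to arbitrary cardinality exactly as you do; if anything, your explicit two-sided inclusion $E_i\subseteq E_j\subseteq E_i$ inside the closure is slightly more careful than the paper's passage from ``isomorphic differential $F$-algebras'' to equality.
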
 

Next we look at differential algebras which are finitely generated as $F$ algebras over differential fields $F$ that admit Picard--Vessiot extensions of arbitrarily large transcendence degree (this is a relatively mild condition satisfied for example by $F= \mathbb C$ with $0$ derivation and $F=\mathbb C(x)$ with derivation $d/dx$) :

\begin{proposition} \label{affine} Let $F$ be a characteristic zero differential field with algebraically closed field of constants $C$. Suppose that $F$ admits Picard--Vessiot extensions of arbitrarily large transcendence degree over $F$. Let $R$ be a differential $F$ algebra which is finitely generated as an $F$ algebra. Then $PC^\text{diff}(R) \neq 0$.
\end{proposition}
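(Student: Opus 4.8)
The plan is to argue by contradiction: assuming $PC^\text{diff}(R)=0$, I will produce a differential module over $F$ whose Picard--Vessiot extension is forced to have bounded transcendence degree, contradicting the hypothesis that such extensions of arbitrarily large transcendence degree exist. The bound comes from the fact that $R$, being finitely generated over $F$, has finite transcendence degree.

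First I would reduce to a simple quotient. Since $R$ is a nonzero differential $F$-algebra, Zorn's lemma produces a maximal differential ideal $I$, and $S:=R/I$ is a simple differential ring which is again finitely generated as an $F$-algebra; set $d:=\operatorname{trdeg}_F S<\infty$. Because $S$ is simple, $S^D$ is a field, so every finitely generated projective $S^D$-module is free. By Corollary \ref{simplequotient}, the assumption $PC^\text{diff}(R)=0$ forces $PC^\text{diff}(S)=0$.

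Next, using the standing hypothesis I choose $A\in M_n(F)$ so that the Picard--Vessiot extension $E_A$ of $F$ for $P=(F^n,A)$ has $\operatorname{trdeg}_F E_A>d$. Viewing $A$ in $M_n(S)$ via $F\to S$, the class of $(S^n,A)$ is $0$ in $PC^\text{diff}(S)$, so $(S^n,A)$ is stably differentially trivial; by Proposition \ref{constantsfree} (applicable since $S^D$ is a field) it is in fact differentially trivial. Hence there is $T\in GL_n(S)$ with $T'=AT$, a fundamental solution matrix with entries in $S$. Let $B:=F[T_{ij},\det(T)^{-1}]\subseteq S$; this is a finitely generated differential $F$-subalgebra with $\operatorname{trdeg}_F B\le\operatorname{trdeg}_F S=d$. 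Choosing a maximal differential ideal $\mathfrak m$ of $B$, the quotient $B_0:=B/\mathfrak m$ is a simple differential ring generated over $F$ by the image $\bar T\in GL_n(B_0)$ of the fundamental solution matrix, with $\bar T'=A\bar T$ and $\operatorname{trdeg}_F B_0\le\operatorname{trdeg}_F B\le d$.

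The crux of the argument --- and the step I expect to be the main obstacle --- is to identify $B_0$ as a Picard--Vessiot ring. Here one invokes the standard fact that, because $C$ is algebraically closed and $\operatorname{char}F=0$, a simple differential ring of the form $F[\bar T_{ij},\det(\bar T)^{-1}]$ with $\bar T'=A\bar T$ is precisely a Picard--Vessiot ring for $y'=Ay$, so that $B_0^D=C$ and $\operatorname{Frac}(B_0)$ is a Picard--Vessiot extension for $P$. By uniqueness of Picard--Vessiot extensions, $\operatorname{Frac}(B_0)\cong E_A$, whence $\operatorname{trdeg}_F E_A=\operatorname{trdeg}_F B_0\le d$. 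This contradicts the choice $\operatorname{trdeg}_F E_A>d$, and therefore $PC^\text{diff}(R)\neq 0$. The remaining points --- existence of maximal differential ideals, that $B$ is differential and finitely generated, and the transcendence-degree inequalities for the inclusion $B\subseteq S$ and the quotient $B\twoheadrightarrow B_0$ --- are routine.
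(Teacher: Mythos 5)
Your proof is correct, and its skeleton matches the paper's: reduce to a differentially simple quotient $S=R/I$ via Corollary \ref{simplequotient}, play the finite transcendence degree $d=\operatorname{trdeg}_F S$ against the hypothesis that Picard--Vessiot extensions of $F$ have arbitrarily large transcendence degree, and use Proposition \ref{constantsfree} (applicable since $S^D$ is a field) to pass between ``stably differentially trivial'' and ``differentially trivial''. Where you genuinely diverge is the endgame. The paper never touches the solution matrix: it passes to the quotient field $E$ of $S$, cites \cite[Corollary 1.18, p. 11]{m2} to conclude $E^D=C$, and chooses $A$ so that $(F^n,A)$ admits no Picard--Vessiot extension inside $E$; then $(S^n,A)$ cannot be differentially trivial (a basis of constants over $S\subseteq E$ would generate such an extension inside $E$), and the contrapositive of Proposition \ref{constantsfree} gives a nonzero class directly. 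You instead argue by contradiction and convert the fundamental matrix $T\in GL_n(S)$ into a Picard--Vessiot extension of small transcendence degree by hand: form $B=F[T_{ij},\det(T)^{-1}]$, kill a maximal differential ideal, and invoke the characterization of Picard--Vessiot rings (differentially simple, generated by the entries of a fundamental matrix and the inverse of its determinant) together with the fact that the fraction field of such a ring is a Picard--Vessiot field. Both endgames rest on standard facts from \cite[Chapter 1]{svp}; the paper's is shorter because the single fact $E^D=C$ does all the work, while yours trades that citation for the Picard--Vessiot ring machinery plus the small supporting facts you correctly flag as routine (differentially simple $\mathbb{Q}$-algebras are domains, so the transcendence-degree inequalities for $B\subseteq S$ and $B\twoheadrightarrow B_0$ behave as claimed). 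One stylistic note: your construction of $B_0$ essentially re-proves, in this special case, the statement that a simple differential $F$-algebra of finite type whose fraction field trivializes $(F^n,A)$ yields a Picard--Vessiot extension of bounded transcendence degree, which is exactly what the paper's citation of \cite{m2} shortcuts.
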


\begin{proof} By Corollary \ref{simplequotient} it is enough to show that for some maximal differential ideal $I$ of $R$ $PC^\text{diff}(R/I) \neq 0$. Thus we can replace $R$ by $R/I$ and assume that $R$ is differentially simple. It follows that the quotient field $E$ of $R$ is an extension of $F$ whose field of constants is also $C$ \cite[Corollary 1.18, p. 11]{m2}. Note that $E$ has finite transcendence degree over $F$, so there are finitely generated differential modules over $F$ 
which do not admit Picard--Vessiot extensions in $E$. In particular, this means there is some matrix $A$ over $F$ such that $(F^n, A)$ does not admit a basis of constants over $E$, and hence does not have one over $R$. Thus $(F^n,A)$ is not a trivial differential $R$ module. Since $R^D$ is the field $C$, all projective $R^D$ modules are free. Proposition \ref{constantsfree} then implies that $(F^n,A)$ is not even stably differentially trivial, and hence its class in $PCM^\text{diff}(R)$ is non--trivial.
\end{proof}

Proposition \ref{affine} applies to differential affine $\mathbb C$ algebras, none of which, including polynomial rings or Laurent polynomial rings, can have trivial differential projective class monoids. 

\section{Cancellation}

Throughout this section we assume that $R$ is connected, so that every projective $R$ module is of constant rank; in fact, we only need this for finitely generated differential projective $R$ modules. The assumption guarantees that the number of
non-zero summands in a direct sum decomposition of a module of rank $r$ is at most $r$.

\begin{definition} \label{D:Rcancellation} $R$ is said to have \emph{free cancellation} if, for finitely generated differential projective $R$ modules $P$ and $Q$, if $P \oplus (R,0)$ is differentially isomorphic to $Q \oplus(R,0)$ then $P$ is differentially isomorphic to $Q$.
\end{definition}

From the definition and induction, using the identification of $(R^n, 0)$ with $(R,0)^{(n)}$, we see that  for a ring with free cancellation cancellation of $(R,0)$  implies the cancellation of $(R^n,0)$. 

\begin{proposition} \label{P:cancelRn} Let $R$ have free cancellation and let $P$ and $Q$ be finitely generated differential projective $R$ modules with $P \oplus (R^n,0)$ differentially isomorphic to $Q \oplus (R^n,0)$. Then $P$ is differentially isomorphic to $Q$.
\end{proposition}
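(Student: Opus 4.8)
The plan is to induct on $n$, taking the definition of free cancellation (the case $n=1$) as the base and the differential splitting $(R^n,0)\cong (R^{n-1},0)\oplus (R,0)$ as the engine of the inductive step. The base case $n=1$ is immediate: the hypothesis $P \oplus (R,0)$ differentially isomorphic to $Q \oplus (R,0)$, together with the assumption that $R$ has free cancellation, gives $P$ differentially isomorphic to $Q$ directly from Definition \ref{D:Rcancellation}.

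For the inductive step I would assume that cancellation of $(R^{n-1},0)$ holds for all finitely generated differential projective modules, and suppose $P \oplus (R^n,0)$ is differentially isomorphic to $Q \oplus (R^n,0)$. First I would use the identification $(R^n,0)=(R,0)^{(n)}$ noted just before the proposition to write $(R^n,0)$ differentially as $(R^{n-1},0)\oplus (R,0)$; this is simply the block-diagonal decomposition of the zero derivation matrix. Rearranging summands, the hypothesis becomes that $\bigl(P \oplus (R^{n-1},0)\bigr)\oplus (R,0)$ is differentially isomorphic to $\bigl(Q \oplus (R^{n-1},0)\bigr)\oplus (R,0)$.

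Now the two modules $P \oplus (R^{n-1},0)$ and $Q \oplus (R^{n-1},0)$ are again finitely generated differential projective $R$ modules, so free cancellation applies to them and yields that $P \oplus (R^{n-1},0)$ is differentially isomorphic to $Q \oplus (R^{n-1},0)$. Applying the inductive hypothesis, i.e. cancellation of $(R^{n-1},0)$, then gives that $P$ is differentially isomorphic to $Q$, completing the induction.

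There is no serious obstacle here; all of the content lies in the remark preceding the proposition, namely that $(R^n,0)$ splits differentially as a direct sum of copies of $(R,0)$. The only point that requires a little care is to apply free cancellation to the enlarged modules $P \oplus (R^{n-1},0)$ and $Q \oplus (R^{n-1},0)$ rather than to $P$ and $Q$ themselves; once the splitting is arranged so that a single copy of $(R,0)$ can be peeled off, the induction is entirely routine.
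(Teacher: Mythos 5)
Your proof is correct and is exactly the argument the paper intends: the paper gives no separate proof of this proposition, stating just before it that the result follows ``from the definition and induction, using the identification of $(R^n,0)$ with $(R,0)^{(n)}$,'' which is precisely the induction you carried out. Your write-up simply makes explicit the regrouping step (applying free cancellation to $P \oplus (R^{n-1},0)$ and $Q \oplus (R^{n-1},0)$) that the paper leaves to the reader.
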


It is a corollary of Proposition \ref{P:cancelRn} that differentially trivial modules of different ranks may be canceled:

\begin{corollary}  \label{C:cancelRn} Let $R$ have free cancellation and let $P$ and $Q$ be finitely generated differential projective $R$ modules with $P \oplus (R^n,0)$ differentially isomorphic to $Q \oplus (R^m,0)$ where $n \geq m$. Then $P$ is differentially isomorphic to $Q \oplus (R^{n-m},0)$.
\end{corollary}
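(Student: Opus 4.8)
The plan is to deduce the statement directly from the single-summand cancellation already available in Proposition \ref{P:cancelRn}, the only supplementary ingredient being that trivial differential modules combine and split freely. Since $n \geq m$ we have $n-m \geq 0$, and the identification of $(R^k,0)$ with $(R,0)^{(k)}$ recorded just before Proposition \ref{P:cancelRn} gives a differential isomorphism $(R^{n-m},0)\oplus(R^m,0)\cong(R^n,0)$. (When $n=m$ the summand $(R^{n-m},0)$ is the zero module and the assertion is trivial, so one may assume $n>m$.)

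The reduction I would carry out turns the stated conclusion into an instance of that cancellation. To show that $P$ is differentially isomorphic to $Q \oplus (R^{n-m},0)$, it suffices, by Proposition \ref{P:cancelRn}, to show that these two modules become differentially isomorphic after adjoining the common trivial summand $(R^m,0)$, since the proposition then permits cancelling it. Absorbing $(R^m,0)$ into the trivial part of $Q \oplus (R^{n-m},0)$ and using the splitting above gives
\[
\big(Q \oplus (R^{n-m},0)\big)\oplus (R^m,0)\;\cong\; Q \oplus (R^n,0),
\]
so the problem reduces to producing a differential isomorphism $P \oplus (R^m,0) \cong Q \oplus (R^n,0)$.

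This required isomorphism is exactly what the hypothesis supplies, once $(R^m,0)$ is displayed as a common direct summand on the two sides; a single application of Proposition \ref{P:cancelRn} to cancel $(R^m,0)$ then delivers $P$ differentially isomorphic to $Q \oplus (R^{n-m},0)$, as claimed. The whole argument is bookkeeping over Proposition \ref{P:cancelRn}, so I do not anticipate a serious obstacle; the one point deserving attention is to organize the trivial summands so that, after the common copy of $(R^m,0)$ is cancelled, the residual $(R^{n-m},0)$ is the summand that accompanies $Q$. Once the common $(R^m,0)$ is isolated on both sides, this grouping is forced and the conclusion follows.
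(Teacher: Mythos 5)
Your reduction is tidy bookkeeping over Proposition \ref{P:cancelRn}, but the final step misquotes the hypothesis. You reduce the claim to producing a differential isomorphism $P \oplus (R^m,0) \cong Q \oplus (R^n,0)$ and then assert that this ``is exactly what the hypothesis supplies.'' It is not: the hypothesis attaches the \emph{larger} trivial summand to $P$, giving $P \oplus (R^n,0) \cong Q \oplus (R^m,0)$ --- the trivial summands sit on the opposite sides from what your reduction requires. What the hypothesis as actually stated yields, after splitting $(R^n,0) \cong (R^{n-m},0)\oplus(R^m,0)$ on the $P$-side and cancelling the common $(R^m,0)$ by Proposition \ref{P:cancelRn}, is $P \oplus (R^{n-m},0) \cong Q$: the surplus trivial summand lands next to $P$, not next to $Q$.

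The deeper point is that the corollary as printed cannot be literally correct when $n > m$. The section's standing assumption that $R$ is connected makes rank additive over direct sums, so $P \oplus (R^n,0) \cong Q \oplus (R^m,0)$ forces $\operatorname{rk} P = \operatorname{rk} Q - (n-m)$, while the stated conclusion $P \cong Q \oplus (R^{n-m},0)$ would force $\operatorname{rk} P = \operatorname{rk} Q + (n-m)$; these are compatible only if $n=m$. The intended conclusion is evidently $Q \cong P \oplus (R^{n-m},0)$ (equivalently, one may swap the roles of $m$ and $n$ in the hypothesis), and this corrected form is what the split-and-cancel argument above proves and what the proof of Proposition \ref{P:uniquecore} actually needs: there one concludes from triviality--freeness of the core that $n-m=0$, which works equally well with the corrected orientation. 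Your proposal, read charitably, is a correct proof of the equivalent formulation with hypothesis $P \oplus (R^m,0) \cong Q \oplus (R^n,0)$; but as a proof of the statement as posed it silently interchanges which module carries the larger trivial summand, and no rearrangement of trivial summands can repair that, since the stated conclusion is false whenever $n > m$.
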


If $n=m$ in Corollary \ref{C:cancelRn} and below then by $(R^0,0$) is meant $0$.

\begin{definition} \label{D:core} Let $P$ be a finitely generated differential projective $R$ module. If $P$ has no differential direct summand isomorphic to $(R,0)$ then $P$ is said to be  \emph{trivial--free}. 
A differential projective submodule $C$ of $P$ is called a \emph{core} provided
\begin{itemize}
\item[1.] $C$ is trivial--free; and\\
\item[2.] $C \oplus (R^n,0)$ is differentially isomorphic to $P$ for some $n$ (possibly $n=0$).
\end{itemize}
\end{definition}

By condition $2.$, $C$ and $P$ represent the same class in $PC^\text{diff}(R)$.

The limit on the number of summands of a finitely generated projective module implies that every finitely generated differential projective module has a core. For example, the zero module is a core of $(R^n,0)$.

In general, a module may have non-isomorphic cores. For example, in the zero derivation case, a non-free stably free module will lead to an example with non-isomorphic cores: see Example \ref{E:nocancellation} below. In the free cancellation case, this doesn't happen:

\begin{proposition} \label{P:uniquecore} Assume $R$ has free cancellation and let $P$ be a finitely generated differential projective module $R$ module. Let $C$ and $D$ be cores of $P$. Then $C$ and $D$ are differentially isomorphic.
\end{proposition}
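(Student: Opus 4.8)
The plan is to reduce everything to a single application of Corollary \ref{C:cancelRn} and then exploit the trivial-freeness of the cores to pin down the ranks. By the definition of a core there are integers $n, m \geq 0$ with $C \oplus (R^n, 0)$ and $D \oplus (R^m, 0)$ both differentially isomorphic to $P$; composing these two isomorphisms gives a differential isomorphism
\[
C \oplus (R^n, 0) \cong D \oplus (R^m, 0).
\]
Without loss of generality I would assume $n \geq m$, interchanging the roles of $C$ and $D$ if necessary.

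First I would invoke Corollary \ref{C:cancelRn}, whose hypotheses are met precisely with $P$ there replaced by $C$ and $Q$ there replaced by $D$: since $R$ has free cancellation and $n \geq m$, the corollary produces a differential isomorphism $C \cong D \oplus (R^{n-m}, 0)$. This step absorbs all of the cancellation content of the free cancellation hypothesis, so essentially no further appeal to the matrix equations is needed.

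The only genuinely new step is to argue that $n = m$. I would suppose instead that $n - m \geq 1$. Then, using the identification of $(R^{n-m}, 0)$ with $(R,0)^{(n-m)}$, the module $D \oplus (R^{n-m}, 0)$ has $(R,0)$ as a differential direct summand, and hence so does $C$. This contradicts the hypothesis that $C$ is trivial-free. Therefore $n = m$, the term $(R^{n-m}, 0) = (R^0, 0) = 0$ drops out, and the isomorphism above reads $C \cong D$, as desired.

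I expect Corollary \ref{C:cancelRn} to carry nearly all the weight, so the main obstacle is not any hard computation but the rank bookkeeping together with the correct deployment of the trivial-free condition to exclude the case $n > m$. What makes the dichotomy $n > m$ versus $n = m$ decisive is precisely that trivial-freeness forbids even a single copy of $(R,0)$ as a differential direct summand; without that observation the argument would stall at $C \cong D \oplus (R^{n-m},0)$.
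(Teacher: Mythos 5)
Your proof is correct and follows essentially the same route as the paper's: apply Corollary \ref{C:cancelRn} to get $C \cong D \oplus (R^{n-m},0)$, then use trivial-freeness of $C$ to force $n=m$. The only cosmetic difference is that the paper finishes by re-invoking Proposition \ref{P:cancelRn}, whereas you simply read $C \cong D$ off the already-obtained isomorphism once $n=m$; both are fine.
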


\begin{proof} For some $n, m$ we have both $C \oplus (R^n,0)$ and $D \oplus(R^m,0)$ isomorphic to $P$ and hence to each other. Without loss of generality we may assume $n \geq m$. By Corollary \ref{C:cancelRn} $C$ is differentially isomorphic to $D \oplus (R^{n-m}, 0)$. Since $C$ has no summand isomorphic to $(R,0)$ we conclude that $n=m$ and hence that by Proposition \ref{P:cancelRn} $C$ and $D$ are differentially isomorphic.
\end{proof}

Using Proposition \ref{P:uniquecore} we introduce, in the case where $R$ has free cancellation, the following notation:

\begin{notation} \label{N:H}    Let $R$ have free cancellation and let $P$ be a finitely generated differential projective $R$ modules. Then $H(P)$ denotes  a core of $P$; $H(P)$ is unique up to isomorphism. \end{notation} 

If $P$ is trivial--free then $H(P)=P$.

Note that $H((R^n,0))=\{0\}$.

Suppose that $R$ has free cancellation and let $P$ be a finitely generated differential projective module. Then $P$ and $H(P)$ belong to the same class in $PC^\text{diff}(R)$; in fact $H(P)$ is a module of minimal rank in the class of $P$. This sets up a bijection between the isomorphism classes of trivial--free finitely generated projective differential modules and $PC^\text{diff}(R)$. This bijection means a monoid structure can be imposed on the set of isomorphism classes of trivial--free finitely generated projective differential modules, namely if $H(P)=P$ and $H(Q)=Q$ then $[P] + [Q]= [H(P\oplus Q)]$. This coincides with the operation on isomorphism classes induced from direct sums provided the direct sum of trivial--free modules is trivial--free. 

We turn now to a class of differential rings which has free cancellation and also this property for direct sums.

\begin{proposition} \label{T:fieldconstantscancel} Suppose $R^D$ is a field. Then $R$ has free cancellation.
\end{proposition}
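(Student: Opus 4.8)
The plan is to unwind the definition: by Definition \ref{D:Rcancellation}, ``free cancellation'' asks exactly that the single rank-one trivial module $T=(R,0)$ cancel from direct sums of differential projective modules. So I would fix a differential isomorphism $\alpha\colon T\oplus P\to T\oplus Q$ and produce from it a differential isomorphism $P\to Q$. The whole argument is a differential-category instance of the classical cancellation theorem for a module with local endomorphism ring; the key structural input is that the \emph{differential} endomorphisms of $T$ form the field $R^D$.

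The first step is that computation. An $R$-linear endomorphism of $R$ is multiplication by some $a\in R$, and since the derivation on $T=(R,0)$ is just $D_R$, commuting with it forces $a'=0$. Hence the differential endomorphism ring of $T$ is $R^D$, which is a field by hypothesis; in particular its only non-unit is $0$. Next I would write $\alpha$ and $\alpha^{-1}$ in block form,
$\alpha=\begin{pmatrix} k & h\\ g & f\end{pmatrix}$ and $\alpha^{-1}=\begin{pmatrix} k' & h'\\ g' & f'\end{pmatrix}$,
where the corner entries $k,k'$ lie in $R^D$ and the remaining entries are differential homomorphisms (each block is a composite of $\alpha$ or $\alpha^{-1}$ with the canonical differential inclusions and projections of a direct sum, hence is differential). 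The identity $\alpha\alpha^{-1}=1$ yields, in the corner, $kk'+hg'=1_T$ in $R^D$. The aim is to arrange that the corner entry be invertible; once it is, a Schur-complement step — left- and right-multiplying by the two unipotent elementary automorphisms that clear the off-diagonal blocks — gives a block-diagonal differential isomorphism whose $P\to Q$ block is the desired isomorphism $P\cong Q$.

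The only delicate point, and where the field hypothesis is indispensable, is making the corner a unit. If $k\neq 0$ then $k$ is already invertible in the field $R^D$ and I pass directly to the Schur-complement step. If $k$ is not a unit, then because $R^D$ is a field we must have $k=0$, so $kk'+hg'=1_T$ collapses to $hg'=1_T$. Precomposing $\alpha$ with the unipotent (hence differential) automorphism $\begin{pmatrix} 1 & 0\\ g' & 1\end{pmatrix}$ of $T\oplus P$ replaces the corner by $k+hg'=1_T$, which is invertible, reducing to the previous case. I expect this corner-unit reduction to be the main obstacle; the endomorphism computation and the block elimination are routine once it is in place. Since cancelling $T=(R,0)$ is precisely the content of free cancellation, this completes the proof.
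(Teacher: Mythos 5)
Your proof is correct, and while it shares its two essential ingredients with the paper's proof --- the computation that the differential endomorphisms of $(R,0)$ are exactly the constants $R^D$, and a case split on whether a certain constant vanishes --- the execution is genuinely different. Under the dictionary between your external picture (an isomorphism $\alpha\colon T\oplus P\to T\oplus Q$) and the paper's internal one (a decomposition $M\oplus(R,0)=N_1\oplus R_1$ with $R_1\cong(R,0)$), your corner entry $k$ is exactly the paper's constant $z$, the second coordinate of the generator of $R_1$. In the generic case ($k\neq 0$, resp.\ $z\neq 0$) both arguments apply a shear: yours clears the off-diagonal blocks by a Schur complement, while the paper moves $0\oplus(R,0)$ onto $R_1$ and identifies the resulting complement of $0\oplus(R,0)$ as the graph of a differential homomorphism $M\to R$ --- the same mechanism in different clothing. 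The real divergence is the degenerate case: the paper handles $z=0$ by a separate internal argument (there $R_1\subseteq M\oplus 0$, and one finishes by intersecting with $N_1$ and passing to $P/R_1$), never invoking the inverse isomorphism; you instead extract from $\alpha\alpha^{-1}=1$ the identity $kk'+hg'=1_T$, so that $k=0$ forces $hg'=1_T$, and the shear $\begin{pmatrix}1&0\\ g'&1\end{pmatrix}$ restores an invertible corner and folds this case into the generic one. Your route is the classical cancellation argument for an object with local endomorphism ring (stable range one), so it is more uniform and generalizes immediately: it shows that any finitely generated differential projective module whose differential endomorphism ring is local cancels, the hypothesis that $R^D$ is a field entering only to make $\mathrm{End}^{\text{diff}}((R,0))=R^D$ local. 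The paper's route stays elementary and self-contained, and its $z=0$ branch yields the extra structural fact that $M$ then contains a differential direct summand isomorphic to $(R,0)$, which is invisible in the matrix formalism.
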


\begin{proof} Let $C$ denote the field $R^D$. Let $M$ be a finitely generated differential projective $R$ module, and suppose $P=M \oplus (R,0)$ can be written as an internal direct sum of differential submodules $N_1$ and $R_1$ where $R_1$ is differentially isomorphic to $(R,0)$. To prove the proposition, we must show that $M$ is differentially isomorphic to $N_1$. 

We write $P=M \oplus (R,0)$ as $\{(m,r) | m\in P, r \in R\}$, and regard $N_1$ and $R_1$ as differential submodules of the latter with $N_1 + R_1 =P$ and $N_1 \cap R_1 =0$. By assumption, $(R,0)$ is differentially isomorphic to $R_1$; let $x=(y,z)$ be the $R$ generator of $R_1$ corresponding to $1$. Since $D(1)=0$, $D(x)=(D(y), D(z))=(0,0)$. Thus $D(y)=0$ and $D(z)=0$ so $z \in C$. 

We consider first the case that $z \neq 0$.

Then $z$ is a unit of $R$, and $R_1=Rx=Rz^{-1}x$ where $z^{-1}x=(w,1)$. Note that $D(w)=D(z^{-1}y)=0$.  Define $T:  P \to P$ by $(m,r) \mapsto (m+rw,r)$. $T$ is a differential endomorphism, and in fact an automorphism with inverse $S$ given by $(m,r) \mapsto (m-rw,r)$. Then $T((0,1))=(w,1)$ which implies that $T(0 \oplus (R,0))=R_1$ and $S(R_1) = 0 \oplus (R,0)$. Also $T(M \oplus 0)=M \oplus 0$; in fact $T$ is the identity on $M \oplus 0$. Let $N=S(N_1)$. Then $N + (0 \oplus (R,0))=S(N_1)+S(R_1)=S(N_1+R_1)=S(P)=P$. Let $(m,r)$ be any element of $P$. Since $P=N+(0\oplus (R,0))$, $(m,r)=n+s(0,1)$ for some $n \in N$ and $s \in R$. Thus $n=(m,r-s)$. In particular, every element of $M$ appears as the first entry of an element of $N$. Also $N \cap (0 \oplus (R,0))=S(N_1) \cap S(R_1)=S(N_1 \cap R_1)=0$. The latter implies that if $(0,r) \in N$ then $r=0$. So if $(m,s) \in N$ and $(m,t) \in N$ then $(m,s)-(m,t)=(0,s-t) \in N$ so $s=t$. That is, the second entry of an element of $N$ is determined by the first entry. Thus we have a function $f:M \to R$ such that $N=\{(m, f(m))|m \in M\}$. Since $N$ is a differential submodule of $P$, $f$ is a differential homomorphism. It follows that $M \to N$ by $m \mapsto (m,f(m))$ is a differential homorphism, in fact a differential isomorphism whose inverse $N \to M$ is given by $(m,f(m)) \mapsto m$. This proves the result in the case $z \neq 0$.

Now consider the case that $z=0$. 

Then $R_1=R(y,0)$ where $D(y)=0$. This implies $R_1 \subset M \oplus 0$.Thus $$(N_1 \cap (M \oplus 0))+R_1 \subseteq M\oplus 0.$$ Let $m \in M$. Since $N_1+R_1=P$, there are $n_1 \in N_1$ and $r_1=r(y,0) \in R_1$ such that $(m,0)=n_1+r_1$. That is $n_1=(m-ry,0) \in M\oplus 0$. Thus $n_1 \in N_1 \cap (M \oplus 0)$. Thus $M \oplus 0=(N_1 \cap (M \oplus 0))+R_1$. Let $N_2=N_1 \cap (M \oplus 0)$. Then $M \oplus 0$ is the internal direct sum of $N_2$ and $R_1$. ($N_2 \cap R_1 = \{0\}$ since $N_1 \cap R_1=\{0\}$.) Reverting to direct sum notation, we have that $M$ is the direct sum $N_2 \oplus R_1$ which means that $P$ is the direct sum $N_2 \oplus R_1 \oplus (R,0)$, while $P$ is also the direct sum to $N_1 \oplus R_1$. Thus on the one hand we have 
$P/R_1$ differentially isomorphic to $N_2 \oplus (R,0)$ and on the other that $P/R_1$ is differentially isomorphic to $N_1$. We conclude that $N_1$ is differentially isomorphic to $N_2 \oplus (R,0)$. Since the latter is differentially isomorphic to $N_2 \oplus R_1$, and this in turn is differentially isomorphic to 
$M$, we have that $M$ and $N_1$ are differentially isomorphic, completing the proof in the case $z=0$ and hence of  the proposition.
\end{proof}

Next, we see that the same condition on $R$ -- that $R^D$ is a field -- which Proposition \ref{T:fieldconstantscancel} shows implies free cancellation also implies that the direct sum of trivial--free modules is trivial--free.

\begin{proposition} \label{T:fieldconstantstrivialfree} Suppose $R^D$ is a field. Then if $P_1$ and $P_2$ are finitely generated differential projective modules such that $P_1 \oplus P_2$ has a differential direct summand differentially isomorphic to $(R,0)$ then either $P_1$ or $P_2$ has such a summand.
\end{proposition}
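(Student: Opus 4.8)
The plan is to reduce the statement to a formal fact about differential functionals. First I would record the following characterization: a finitely generated differential projective module $M$ has a differential direct summand differentially isomorphic to $(R,0)$ if and only if there is a constant $y \in M$ (that is, $D_M(y)=0$) together with a differential $R$-linear functional $\lambda: M \to R$, where $R$ is given its own derivation, satisfying $\lambda(y)=1$. Indeed, from such a pair one builds the map $e(p)=\lambda(p)y$, which is an $R$-linear idempotent; it is differential because $e(D_M p)=\lambda(D_M p)\,y=(\lambda(p))'\,y$ while $D_M(e(p))=(\lambda(p))'\,y+\lambda(p)\,D_M(y)=(\lambda(p))'\,y$, using $D_M(y)=0$. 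Its image $Ry$ is free of rank one (since $\lambda(ry)=r$) and differentially isomorphic to $(R,0)$. Conversely, a splitting $M=L\oplus N$ with $L\cong(R,0)$ produces such a pair by projecting onto $L$ and composing with the chosen isomorphism. So the proposition amounts to producing such a pair for $P_1$ or for $P_2$.

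Next I would apply this to $P=P_1\oplus P_2$. The hypothesis furnishes a constant generator $x\in P$ and a differential functional $\psi:P\to R$ with $\psi(x)=1$. Writing $x=x_1+x_2$ with $x_i\in P_i$ and $\psi_i=\psi|_{P_i}$, the fact that $P=P_1\oplus P_2$ is an internal sum of differential submodules gives $D(x)=D(x_1)+D(x_2)$ with each $D(x_i)\in P_i$, whence $D(x_1)=D(x_2)=0$; and $\psi_1(x_1)+\psi_2(x_2)=\psi(x)=1$.

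The crux is then to set $a=\psi_1(x_1)$ and $b=\psi_2(x_2)$. Since each $\psi_i$ is differential and each $x_i$ is constant, $(\psi_i(x_i))'=\psi_i(D(x_i))=0$, so $a,b\in R^D$; and $a+b=1$. Here the hypothesis that $R^D$ is a field does the real work: in a field, $a+b=1$ forces at least one of $a,b$ to be nonzero and hence invertible. Say $a$ is invertible in $R^D\subseteq R$. Then $y=x_1$ and $\lambda=a^{-1}\psi_1:P_1\to R$ are the pair sought for $P_1$: the scalar $a^{-1}\in R^D$ is constant, so $\lambda$ is differential, $\lambda(x_1)=1$, and $D(x_1)=0$; by the first paragraph $P_1$ has a differential direct summand differentially isomorphic to $(R,0)$. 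Symmetrically, if $b$ is invertible then $P_2$ does, completing the proof.

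The step I expect to require the most care is the verification that $a$ and $b$ genuinely lie in $R^D$ and that the rescaled functional $a^{-1}\psi_1$ remains differential; both hinge on $a^{-1}$ being a \emph{constant}, which is precisely where the field hypothesis on $R^D$ is indispensable. Over a general ring of constants the relation $a+b=1$ need not make either summand invertible --- witness $3+(-2)=1$ in $\mathbb Z$ --- so no splitting of either factor would be forced. Everything else reduces to the routine bookkeeping that idempotents, functionals, and their constant rescalings respect the derivation.
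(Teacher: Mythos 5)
Your proof is correct and takes essentially the same route as the paper's: your pair $(x,\psi)$ is precisely the paper's section--retraction pair $g,f$ for the summand (with $x=g(1)$ and $\psi=f$), your constants $a=\psi_1(x_1)$ and $b=\psi_2(x_2)$ are the paper's $a_1,a_2$ arising from the composites $f_ig_i$, and both arguments finish by using the field hypothesis on $R^D$ to invert one of two constants summing to $1$ and rescaling. The only cosmetic difference is that you spell out the equivalence between differential summands isomorphic to $(R,0)$ and pairs (constant element, differential functional), which the paper uses implicitly via the maps $f$ and $g$.
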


\begin{proof} Suppose there is a direct summand as in the proposition Let $g:(R,0) \to P_1 \oplus P_2$ and $f:P_1 \oplus P_2 \to (R,0)$ be differential homomorphisms with $fg$ the identity. Let $f_i:P_i \to (R,0)$ be $f$ preceded by inclusion on summand $i$ and let $g_i:(R,0) \to P_i$ be $g$ followed by projection on summand $i$. For each $i$, $f_ig_i$ is a differential endomorphism of $(R,0)$ and hence given by multiplication by a constant $a_i$. For $r \in (R,0)$, $r=fg(r)=f_1(g_1(r))+f_2(g_2(r))$. Thus at least one of the $f_ig_i$ is non-zero, say for $i=1$. Thus the constant $a_1$ is non-zero, and since $R^D$ is a field $a_1$ is a unit. It follows that $(a_1^{-1}f_1)g_1$ is the identity, showing that $g_1((R,0))$ is a differential direct summand of $P_1$ isomorphic to $(R,0)$.
\end{proof}

\begin{corollary} \label{sumoftrivialfree} Suppose $R^D$ is a field. Let $P$ and $Q$ be finitely generated differential projective $R$ modules. Then $H(P) \oplus H(Q)$ is differentially isomorphic to $H(P \oplus Q)$.
\end{corollary}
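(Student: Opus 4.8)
The plan is to show that $H(P)\oplus H(Q)$ is itself a core of $P\oplus Q$ and then invoke the uniqueness of cores. Since $R^D$ is a field, Proposition \ref{T:fieldconstantscancel} tells us $R$ has free cancellation, so the core operation $H$ is defined and unique up to differential isomorphism by Proposition \ref{P:uniquecore}. Thus it suffices to verify that $H(P)\oplus H(Q)$ satisfies the two conditions of Definition \ref{D:core} for the module $P\oplus Q$.

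First I would check condition $2$. By the definition of core there are integers $a,b$ with $H(P)\oplus(R^a,0)$ differentially isomorphic to $P$ and $H(Q)\oplus(R^b,0)$ differentially isomorphic to $Q$. Taking direct sums and rearranging the trivial summands gives that $\bigl(H(P)\oplus H(Q)\bigr)\oplus(R^{a+b},0)$ is differentially isomorphic to $P\oplus Q$. This is exactly condition $2$ of Definition \ref{D:core} with $n=a+b$, so $H(P)\oplus H(Q)$ represents the same class as $P\oplus Q$ in $PC^\text{diff}(R)$.

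Next I would verify condition $1$, namely that $H(P)\oplus H(Q)$ is trivial--free. Here both $H(P)$ and $H(Q)$ are trivial--free by the definition of core, i.e.\ neither has a differential direct summand differentially isomorphic to $(R,0)$. The contrapositive of Proposition \ref{T:fieldconstantstrivialfree} states precisely that a direct sum of two modules, neither of which has such a summand, again has no such summand; applying it to $P_1=H(P)$ and $P_2=H(Q)$ shows $H(P)\oplus H(Q)$ is trivial--free. With both conditions confirmed, $H(P)\oplus H(Q)$ is a core of $P\oplus Q$, and uniqueness of cores (Proposition \ref{P:uniquecore}) then yields that it is differentially isomorphic to $H(P\oplus Q)$.

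The substance of the argument lives entirely in verifying the trivial--free property, which is where Proposition \ref{T:fieldconstantstrivialfree} is indispensable: condition $2$ is a routine bookkeeping of trivial summands, but without knowing that trivial--freeness is preserved under direct sums one could not conclude that $H(P)\oplus H(Q)$ is a core at all, and the uniqueness step would have nothing to apply to. I expect no further obstacle once that proposition is in hand, since the hypothesis that $R^D$ is a field supplies both the free cancellation needed for $H$ to be well defined and the stability of trivial--freeness under sums.
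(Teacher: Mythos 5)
Your proof is correct and follows essentially the same route as the paper's: you verify that $H(P)\oplus H(Q)$ is a core of $P\oplus Q$ by combining the two trivial-summand decompositions (condition 2) and invoking the contrapositive of Proposition \ref{T:fieldconstantstrivialfree} (condition 1), then conclude by uniqueness of cores. The only cosmetic difference is that you cite Proposition \ref{P:uniquecore} explicitly for the final identification, which the paper leaves implicit.
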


\begin{proof} By Proposition \ref{T:fieldconstantscancel}, $R$ has cancellation, so both $P$ and $Q$ have cores unique up to differential isomorphism, so the symbols $H(P)$ and $H(Q)$ make sense. For some $m,n$ we have $P$ isomorphic to $H(P) \oplus (R^m,0)$ and $Q$ isomorphic to $H(Q) \oplus (R^n,0)$. Thus $P \oplus Q$ is differentially isomorphic to $H(P) \oplus H(Q) \oplus (R^{m+n},0)$. By Proposition \ref{T:fieldconstantstrivialfree}, or rather its contrapositive, $H(P) \oplus H(Q)$ is trivial--free, and the previous differential isomorphism shows that it is a core of $P \oplus Q$. 
\end{proof}

Corollary \ref{sumoftrivialfree} implies that, in the case that $R^D$ is a field,  (1) the isomorphism classes of trivial--free finitely generated projective differential modules form a monoid with the operation induced from direct sum; and (2) that this monoid is isomorphic to $PC^\text{diff}(R)$:

\begin{theorem} \label{T:trivialfreeareall} Suppose $R^D$ is a field. Then
\begin{itemize}
\item[1.] The differential isomorphism classes of trivial--free finitely generated projective differential $R$ modules form a monoid $M$ with the operation induced from direct sum. \\
\item[2.] $P \mapsto H(P)$ induces an isomorphism $PC^\text{diff}(R) \to M$.
\end{itemize}
\end{theorem}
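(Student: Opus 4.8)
The plan is to handle the two parts in turn, leaning entirely on the cancellation machinery already established. Since $R^D$ is a field, Proposition \ref{T:fieldconstantscancel} tells us $R$ has free cancellation, so by Proposition \ref{P:uniquecore} and Notation \ref{N:H} every finitely generated differential projective module $P$ has a core $H(P)$, well defined up to differential isomorphism. (Note also that $R$ is automatically connected here, since a nontrivial idempotent would be a constant and hence a nontrivial idempotent of the field $R^D$; so the standing hypothesis of this section is in force.)

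For Part 1 I would show that the set of differential isomorphism classes of trivial--free modules is closed under the operation induced by direct sum. Direct sum manifestly respects differential isomorphism, the zero module is trivial--free and acts as an identity, and associativity and commutativity are inherited from $\oplus$; the only thing to verify is that $P_1 \oplus P_2$ is trivial--free whenever $P_1$ and $P_2$ are. This is precisely the contrapositive of Proposition \ref{T:fieldconstantstrivialfree}: if $P_1 \oplus P_2$ carried a summand differentially isomorphic to $(R,0)$, then one of $P_1$, $P_2$ would too. Hence the classes of trivial--free modules form a commutative monoid $M$ under $\oplus$.

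For Part 2, define $\bar H\colon PC^{\text{diff}}(R) \to M$ by $\bar H([P]) = [H(P)]$. First I would check well--definedness: if $[P]=[Q]$ in $PC^{\text{diff}}(R)$ there are $m,n$ with $P \oplus (R^n,0)$ differentially isomorphic to $Q \oplus (R^m,0)$; writing $H(P) \oplus (R^a,0) \cong P$ and $H(Q) \oplus (R^b,0) \cong Q$, both $H(P)$ and $H(Q)$ are trivial--free and become, after adjoining a trivial module, differentially isomorphic to the single module $P \oplus (R^n,0) = Q \oplus (R^m,0)$. Thus each is a core of that common module, so Proposition \ref{P:uniquecore} forces $H(P)$ and $H(Q)$ to be differentially isomorphic, and $\bar H$ is well defined. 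That $\bar H$ is a homomorphism is then immediate from Corollary \ref{sumoftrivialfree}, which identifies $H(P) \oplus H(Q)$ with $H(P \oplus Q)$: using the operation on $M$ from Part 1 we get $\bar H([P]) + \bar H([Q]) = [H(P) \oplus H(Q)] = [H(P\oplus Q)] = \bar H([P]+[Q])$, and $H(0)=0$ sends the identity to the identity.

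Finally I would establish bijectivity. Surjectivity is formal: a trivial--free $T$ satisfies $H(T)=T$, whence $\bar H([T]) = [T]$, so every class in $M$ is attained. For injectivity, observe that $P \cong H(P) \oplus (R^a,0)$ gives $[P]=[H(P)]$ in $PC^{\text{diff}}(R)$, since the trivial summand lies in $F_0^{\text{diff}}(R)$ and is killed in the quotient; hence $H(P)$ differentially isomorphic to $H(Q)$ yields $[P]=[H(P)]=[H(Q)]=[Q]$. Therefore $\bar H$ is a monoid isomorphism. All of the substance is carried by the cancellation results (free cancellation, uniqueness of cores, and additivity of $H$); the one step demanding genuine care is the well--definedness argument, where one must correctly recognize $H(P)$ and $H(Q)$ as cores of one and the same module so that Proposition \ref{P:uniquecore} applies.
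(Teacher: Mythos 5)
Your proof is correct and follows essentially the same route as the paper: both rest on free cancellation (Proposition \ref{T:fieldconstantscancel}), uniqueness of cores (Proposition \ref{P:uniquecore}), and the trivial--free/additivity results (Proposition \ref{T:fieldconstantstrivialfree} and Corollary \ref{sumoftrivialfree}), with surjectivity via $H(T)=T$ and injectivity via $[P]=[H(P)]$ in $PC^\text{diff}(R)$. The only cosmetic difference is that for well--definedness you invoke Proposition \ref{P:uniquecore} directly, viewing $H(P)$ and $H(Q)$ as cores of the common module $P\oplus(R^n,0)\cong Q\oplus(R^m,0)$, whereas the paper routes this through the additivity $H(P\oplus(R^n,0))\cong H(P)\oplus H((R^n,0))$ of Corollary \ref{sumoftrivialfree}; these are interchangeable steps within the same argument.
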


\begin{proof} For assertion $1.$, suppose $P$ and $Q$ are trivial--free finitely generated  projective differential $R$ modules. Then $H(P)$ is differentially isomorphic to $ P$ and $H(Q)$ is differentially isomorphic to $Q$. Since $H(P \oplus Q)$ is  differentially isomorphic to $H(P) \oplus H(Q)$ by Corollary \ref{sumoftrivialfree}, $H(P \oplus Q)$ is differentially isomorphic to $P \oplus Q$, which means $P \oplus Q$ is trivial free. Thus the monoid $M$ is well-defined. For assertion $2.$, we begin by noting that
by Proposition \ref{P:uniquecore} and Proposition \ref{T:fieldconstantscancel} $P \mapsto H(P)$ defines an endomorphism of
 the monoid $P^\text{diff}(R)$ of isomorphism classes of finitely generated  projective differential $R$ modules with the operation induced from direct sum as in Definition \ref{themonoids}. If $P$ and $Q$ are finitely generated  projective differential $R$ modules and there are $m,n$ with $P \oplus (R^n,0)$ differentially isomorphic to $Q \oplus (R^m,0)$  then by Corollary \ref{sumoftrivialfree} $H(P \oplus (R^n,0))$ is isomorphic to $H(P) \oplus H((R^n,0))$ and since the second summand is $\{0\}$ we conclude that $H(P\oplus (R^n,0))$ is differentially isomorphic to $H(P)$. Similarly, $H(Q \oplus (R^m,0))$ is differentially isomorphic to $H(Q)$. Hence $H(P)$ is differentially isomorphic to $H(Q)$. Thus the monoid homomorphism $P^\text{diff}(R) \to M$ induced by $P \mapsto H(P)$ factors through $PC^\text{diff}(R)$ and defines a monoid homomorphism $h:PC^\text{diff}(R) \to M$. To see that $h$ is onto, let $P$ be a trivial--free finitely generated differential projective $R$ module. Then since $H(P)$ is differentially isomorphic to $P$, $h$ applied to the class of $P$ in $PC^\text{diff}$ is the class of $P$ in $M$. So $h$ is onto. Suppose $P$ and $Q$ are sent to the same element by $h$. That means $P$ and $Q$ have differentially  isomorphic cores. Let $C$, $D$ be cores of $P$, $Q$ respectively. Then by Definition \ref{D:core} $2.$  there are $m,n$ such that $P$ is differentially isomorphic to $C \oplus (R^n,0)$ and $Q$ is differentially isomorphic to $D \oplus (R^m,0)$. Since $C$ and $C \oplus (R^n,0)$ (respectively $D$ and $D \oplus (R^m,0)$) represent the same element of $PC^\text{diff}(R)$, and so do $C$ and $D$, we have that $P$ and $Q$ represent the same element of $PC^\text{diff}(R)$. This shows that $h$ is injective and hence an isomorphism.
 \end{proof}
 
 It is a consequence of Theorem \ref{T:trivialfreeareall} that if $P$ and $Q$ are non-isomorphic trivial--free finitely generated projective differential $R$ modules, for instance if they have different ranks, then 
 since they represent different classes in $M$ they represent different classes in $PC^\text{diff}(R)$.
 
 We can think of $P \mapsto H(P)$ as inducing an endomorphism of $P^\text{diff}(R)$ which is the identity on $M$ and whose image is $M$. Theorem \ref{T:trivialfreeareall} says that $M$ is isomorphic to $PC^\text{diff}(R)$. Restated:

\begin{corollary} \label{C:split} Suppose $R^D$ is a field. Then the monoid epimorphism $$P^\text{diff}(R) \to PC^\text{diff}(R)$$ has a right inverse.
\end{corollary}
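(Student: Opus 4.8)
The plan is to exhibit the right inverse explicitly as the map that assigns to each class in $PC^\text{diff}(R)$ the class of its core, regarded back inside $P^\text{diff}(R)$. Write $p: P^\text{diff}(R) \to PC^\text{diff}(R)$ for the projection, and let $M$ denote the monoid of differential isomorphism classes of trivial--free finitely generated projective differential $R$ modules as in Theorem \ref{T:trivialfreeareall}. That theorem supplies an isomorphism $h: PC^\text{diff}(R) \to M$ given by $[P] \mapsto [H(P)]$. I would take the candidate section to be $s = \iota \circ h$, where $\iota: M \to P^\text{diff}(R)$ sends a trivial--free class to the same class viewed in $P^\text{diff}(R)$.

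First I would check that $\iota$ is a monoid homomorphism, which is exactly the assertion that $M$ is a submonoid of $P^\text{diff}(R)$, i.e. that the operation on $M$ coincides with the direct sum operation of $P^\text{diff}(R)$. The content here is that a direct sum of trivial--free modules is again trivial--free, which is Corollary \ref{sumoftrivialfree} (equivalently, assertion $1$ of Theorem \ref{T:trivialfreeareall}); the zero module supplies the identity. Granting this, $s$ is a composite of monoid homomorphisms and hence a monoid homomorphism.

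It then remains to verify that $p \circ s$ is the identity on $PC^\text{diff}(R)$. For a class $[P]$ we have $s([P]) = \iota(h([P])) = [H(P)]$ in $P^\text{diff}(R)$, so $p(s([P]))$ is the class of $H(P)$ in $PC^\text{diff}(R)$. By condition $2$ of Definition \ref{D:core}, $H(P) \oplus (R^n,0)$ is differentially isomorphic to $P$ for some $n$, so $H(P)$ and $P$ represent the same element of $PC^\text{diff}(R)$; hence $p(s([P])) = [P]$. Equivalently, one observes directly that $p \circ \iota: M \to PC^\text{diff}(R)$ is precisely $h^{-1}$, whence $p \circ s = (p \circ \iota) \circ h = h^{-1} \circ h = \mathrm{id}$.

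I expect no serious obstacle once Theorem \ref{T:trivialfreeareall} is in hand; the only point requiring care is that the section lands in $P^\text{diff}(R)$ as a genuine homomorphism rather than merely in the abstract monoid $M$, and this is guaranteed by the closure of trivial--free modules under direct sum (Corollary \ref{sumoftrivialfree}). In the language of the remark preceding the corollary, the composite $s \circ p$ is exactly the idempotent endomorphism $[P] \mapsto [H(P)]$ of $P^\text{diff}(R)$ whose image is $M$ and which is the identity on $M$, which makes transparent why $s$ splits $p$.
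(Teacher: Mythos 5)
Your proposal is correct and follows essentially the same route as the paper: the paper treats the corollary as a restatement of the remark immediately preceding it, namely that $P \mapsto H(P)$ induces an idempotent endomorphism of $P^\text{diff}(R)$ with image $M$, combined with the isomorphism $PC^\text{diff}(R) \cong M$ from Theorem \ref{T:trivialfreeareall}. Your section $s = \iota \circ h$, with the verification that $\iota$ is a homomorphism via Corollary \ref{sumoftrivialfree} and that $p \circ s = \mathrm{id}$ via condition $2$ of Definition \ref{D:core}, is exactly this argument spelled out explicitly.
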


The monoid epimorphism $P^\text{diff}(R) \to PC^\text{diff}(R)$ is injective on $M$ and hence on submonoids of $M$. As an example of some calculations using this observation, we consider the case $R=\mathbb C[x]$ with $D=d/dx$. We have $R^D= \mathbb C$ is a field, so Theorem \ref{T:trivialfreeareall} applies. We consider modules of the form $(R,f)$ for $f \in R$, where we have identified $1 \times 1$ matrices with their only entry. A differential homomorphism $(R,f) \to (R,g)$ is given by multiplication by $a \in R$ satisfying $a^\prime= fa-ag$. Thus $a^\prime=a(f-g)$ which, considering polynomial degrees,  is only possible if $a=0$ or, if $a \neq 0$, when both $a \in \mathbb C$ and $f=g$. Only the second possibility holds if $(R,f) \to (R,g)$ is an isomorphism. In particular, if $f \neq 0$ then $(R,f)$ is not isomorphic to $(R,0)$, implying that $(R,f)$ is trivial free. Thus if $f_1, \dots, f_n \in R$ are all non-zero, by Corollary \ref{sumoftrivialfree}  $P=(R,f_1) \oplus \dots \oplus (R,f_n)$ is trivial--free. Similarly, if  $g_1, \dots, g_m$ are non-zero elements of $R$ then $Q=(R,g_1) \oplus \dots \oplus (R,g_m)$ is trivial--free. Thus $P$ and $Q$ will represent different classes in $PC^\text{diff}(R)$ unless $P$ and $Q$ are isomophic. This requires that $m=n$, and then we can represent a differential isomorphism $T:P \to Q$ by an $n \times n$ matrix whose entries are  differential homomorphisms $(R,f_i) \to (R,g_j)$. As we have seen, these are all given by multiplication by elements of $\mathbb C$, where the elements are zero unless $f_i=g_j$. Thus if no $f_i$ equals any $g_j$ then $P$ and $Q$ are not differentially isomorphic. We can use these observations to produce uncountably many distinct elements of the monoid of isomorphism classes of trivial--free finitely generated differential projective $R$ modules and hence of $PC^\text{diff}(R)$. Of course in this case both $PC(R^D)=0$ and $PC(R)=0$ as well.

\section{Zero Derivation Rings}

Any commutative ring can be made the underlying ring of a differential ring by giving it the zero derivation. A derivation on a module over a zero derivation ring is just a linear endomorphism, so any module over a zero derivation ring can be made into a differential module by choosing such an endomorphism. Thus examples of projective modules, such as non--free projective modules, or non--free stably free projective modules, can be used to give examples of differential projective modules whose underlying projective modules have these properties. In particular, the endomorphism chosen could be the zero map (multiplication by $0$), the identity map (multiplication by $1$) or multiplication by any other scalar, as we do in the following result.

\begin{proposition} \label{P:zeroderivative} Let $A, B$, and $C$ be modules over the commutative ring $R$. Give $R$ the zero derivation and let $A$ and $B$ be given differential structures by using their identity endomorphisms. Let $C$ be given differential structure  by the zero endomorphism. Suppose $A \oplus C$ is differentially isomorphic to $B \oplus C$. Then $A$ is differentially isomorphic to $B$.
\end{proposition}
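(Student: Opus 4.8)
The plan is to exploit the special feature of the zero derivation setting, already noted at the start of this section: over a zero derivation ring a module derivation is simply an $R$-linear endomorphism, and a differential homomorphism is nothing but an $R$-linear map commuting with these endomorphisms. First I would record the differential structures explicitly. Since $A$ and $B$ carry their identity endomorphisms and $C$ carries the zero endomorphism, the derivation $D$ on $A \oplus C$ sends $(a,c)$ to $(a,0)$, and likewise the derivation $D'$ on $B \oplus C$ sends $(b,c)$ to $(b,0)$. The crucial structural point is that both $D$ and $D'$ are idempotent.

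The key observation is that the summand $A$ can be recovered intrinsically from the differential structure as the set of $D$-fixed vectors: $\{v \in A \oplus C : Dv = v\} = A \oplus 0$, because $(a,0)=(a,c)$ forces $c=0$. Similarly $\{w \in B \oplus C : D'w = w\} = B \oplus 0$. Now suppose $T \colon A \oplus C \to B \oplus C$ is a differential isomorphism, so that $T$ is $R$-linear, bijective, and satisfies $TD = D'T$. If $Dv = v$ then $D'(Tv) = T(Dv) = Tv$, so $T$ carries the $D$-fixed set into the $D'$-fixed set; applying the same reasoning to $T^{-1}$ (which also commutes with the derivations) shows that $T$ restricts to an $R$-module isomorphism $A \oplus 0 \to B \oplus 0$, that is, an $R$-isomorphism $A \to B$.

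Finally I would check that this restriction is in fact a differential isomorphism. On $A \oplus 0$ the derivation $D$ acts as the identity, which is exactly the differential structure $D_A = \mathrm{id}_A$ chosen on $A$; likewise $D'$ restricted to $B \oplus 0$ is $D_B = \mathrm{id}_B$. Since both $A$ and $B$ carry the identity endomorphism as their derivation, \emph{any} $R$-linear map between them automatically commutes with their derivations. Hence the $R$-isomorphism produced above is a differential isomorphism $A \to B$, as required.

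There is no serious obstacle in this argument. The only point requiring genuine care is the verification that the fixed set of $D$ is exactly $A \oplus 0$ and not some larger submodule, since it is precisely this identification that lets us recover $A$ intrinsically from $(A \oplus C, D)$ and transport it across $T$. The whole argument rests on the idempotence of the derivations, which is special to the zero derivation ring together with the choice of identity and zero endomorphisms on the summands.
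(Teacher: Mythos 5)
Your proof is correct, but it runs along a route complementary to the paper's. The paper's proof recovers $C$ intrinsically: it observes that the constants $(A \oplus C)^D = \{0\} \oplus C$ and $(B \oplus C)^D = \{0\} \oplus C$ are differential submodules carried onto one another by any differential isomorphism $f$, and then obtains the isomorphism $A \to B$ by passing to the quotients $(A \oplus C)/(A \oplus C)^D$ and $(B \oplus C)/(B \oplus C)^D$. You instead recover $A$ intrinsically, as the fixed-point set $\{v : Dv = v\} = A \oplus 0$ (the kernel of $D - \mathrm{id}$ rather than the kernel of $D$), and get the isomorphism by restricting $T$ and $T^{-1}$ to these fixed sets. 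Both arguments exploit the same underlying fact -- that the derivation singles out one summand in a way every differential isomorphism must respect -- but your version trades the paper's quotient construction for a submodule restriction, which saves you from verifying that the induced map on quotients is well defined and differential; in exchange you need the small symmetric argument with $T^{-1}$ to see that the restriction is onto. Your final verification, that the restricted map is automatically differential because both $A$ and $B$ carry the identity derivation, is also correct and is the step the paper handles implicitly when it identifies the quotient differential structure. One stylistic quibble: the argument does not really rest on idempotence of $D$ as such, only on the explicit formula $D(a,c)=(a,0)$, from which the fixed set can be computed directly; idempotence is a consequence, not the driving mechanism.
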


\begin{proof} Let $f: A \oplus C \to B \oplus C$ be a differential isomorphism. Since $(A \oplus C)^D$ is a differential submodule of $A \oplus C$, $(B \oplus C)^D$ is a differential submodule of $B \oplus C$,  and $f((A \oplus C)^D) = (B \oplus C)^D$, $f$ induces a differential isomorphism $(A\oplus C)/(A \oplus C)^D \to (B \oplus C)/(B \oplus C)^D$. Since $(A \oplus C)^D= \{0\} \oplus C$ and $(B \oplus C)^D= \{0\} \oplus C$
this induced isomorphism is a differential isomorphism between $A$ and $B$.
\end{proof}

In Proposition \ref{P:zeroderivative} if $C$ is the free module $R^n$ then $C$ with the zero derivative is $(R^n,0)$. Thus if $A$ and $B$ have the identity derivation, and if $A \oplus (R^n,0)$ is differentially isomorphic to $B \oplus (R^n,0)$, then $A$ is differentially isomorphic to $B$. 

\begin{example} \label{E:nocancellation} Let $R$ be a connected commutative ring and let $A$ be a non-free stably free $R$ module; for example, \cite[Example 1, p. 269]{sw}. That is, there are $m,n$ such that there is an $R$ isomorphism $f: A \oplus R^m \to R^n$. If follows that $A$ is projective of rank $p=n-m$. Moreover $f$ can be regarded as an isomorphism from $A \oplus R^m$ to $R^p \oplus R^m$. However $A$ is not isomorphic to $R^p$. Now give $R$ the zero derivation structure and $A$, $R^p$, and $R^m$ the differential structure coming from the identity. Then $f$ is a differential homomorphism. If we let $B$ denote $R^p$ with the identity derivation and $P$ denote $R^m$ with the identity derivation then $A \oplus P$ is differentially isomorphic to $B \oplus P$, but, since $A$ and $B$ are not isomorphic, Proposition \ref{P:zeroderivative} shows that for no $q$ is $A \oplus (R^q,0)$ isomorphic to $B \oplus (R^q,0)$.
\end{example}

In the notation of Example \ref{E:nocancellation}, we see that the differential projective $R$ modules $A$ and $B$ give rise to distinct elements of $PC^\text{diff}(R)$ but the same element of $K_0^\text{diff}(R)$.

We have recalled the fact \cite[Theorem 2, p. 4341]{jm2}  that for any differential ring $R$ every finitely generated projective $R$ module $M$ can be endowed with  a differential structure. This makes the forgetful functor $\mathcal U$ from the additive category of finitely generated differential projective $R$ modules to the additive category of finitely generated projective $R$ modules surjective on objects. In the case that $R$ has the zero derivation, as we noted above, we can make a projective module $M$ a differential module using the zero endomorphism of $M$. And if $M$ and $N$ are both finitely generated projective $R$ modules and $f:M \to N$ an $R$ module homomorphism, then $f$ is a differential homomorphism when both $M$ and $N$ are given differential structure using their zero endomorphisms. That is, using the zero endomorphism differential structures produces an additive functor $\mathcal F$ from the category of finitely generated projective $R$ modules to the category of 
finitely generated differential projective $R$ modules with $\mathcal U\mathcal F$ the identity. Note that $\mathcal F(R)=(R,0)$. Could such a functor exist in general for any differential commutative ring $R$? 

More precisely: Let $R$ be a differential ring and let $\mathcal U$ denote the forgetful functor from the additive category of finitely generated differential projective $R$ modules to the additive category of finitely generated projective $R$ modules. We ask: can there be an additive functor $\mathcal F$ from the additive category of finitely generated projective $R$ modules to the additive category of finitely generated differential projective $R$ modules with
\begin{itemize}
\item[1.] $\mathcal U\mathcal F$ naturally equivalent to the identity; and \\
\item[2.] $\mathcal F(R)=(R,0)$.\\
\end{itemize} 

The additive functors $\mathcal U$ and $\mathcal F$ will induce monoid homomorphisms $$u: P^\text{diff}(R) \to P(R)$$  $$f: P(R) \to P^\text{diff}(R)$$ which, by condition $1.$, are such that $uf$ is the identity. Since $\mathcal U((R^n,0))=R^n$, $u$ induces a monoid homomorphism 
$\overline{u}:PC^\text{diff}(R) \to PC(R)$. An additive functor $\mathcal F$ satisfying condition $2.$ will have $\mathcal F(R^n)=(R,0)^{(n)}$ which is differentially isomorphic to $(R^n,0)$.  Thus $f$ carries $F(R)$ to $F_0^\text{diff}(R)$ and hence induces a monoid homomorpism $\overline{f}: PC(R) \to PC^\text{diff}(R)$ with $\overline{u} \overline{f}$ equaling the identity.  Since (by Proposition \ref{reducedK}) $PC(R)$ is a group, so is its isomorphic image  $\overline{f}(PC(R))$. Now suppose that $R$ is such that $PC(R^D)=0$ and $PC(R) \neq 0$. Then by Corollary \ref{trivialPCMs} $PC^\text{diff}(R)$ has no invertible elements, so its subgroup $\overline{f}(PC(R))$ is trivial. This contradiction means that for such a ring $R$ there is no such functor $\mathcal F$. For a concrete example of such an $R$ we can take the Picard--Vessiot ring $R$ of a Picard--Vessiot extension of a differential field $F$ with differential Galois group $G$ such that the coordinate ring $F[G]$ has non stably free projective modules. Then $R^D=F$ so $PC(R^D)=0$ while $PC(R)=PC(F[G]) \neq 0$.

This paper has been published in \emph{Communications in Algebra} 

 https://doi.org/10.1080/00927872.2022.2045493.

\end{document}